\newdimen\symskip
\newdimen\defskip
\newdimen\parind
\newdimen\leftmarge
\newdimen\theoremshape
\newcommand*{\clei}{\nobreak\hskip\z@skip}
\newcommand{\?}{\,\nobreak\hskip0pt}
\renewcommand{\"}{''}
\renewcommand{\:}{\textup{:}}
\renewcommand{\~}{\textup{;}}
\DeclareRobustCommand*{\т}{~\textemdash{} }
\DeclareRobustCommand*{\д}{\clei\hbox{-}\clei}
\newcommand{\no}{}
\renewcommand{\@listI}{\settowidth\labelwidth{\labheadi{\no}}\listipar{\parind}{\labelwidth}}
\newcommand{\listivpar}{\topsep\defskip\partopsep0pt\parsep-\parskip\itemsep0.5\topsep}
\newcommand{\listipar}[2]{\rightmargin0pt\leftmargin#1\labelsep#1\advance\labelsep-#2\itemindent0pt\listivpar}
\renewcommand{\@listii}{\settowidth\labelwidth{\labheadii{\@roman{\no}}}\listiipar{\parind}{\labelwidth}}
\newcommand{\listiivpar}{\topsep0.5\defskip\partopsep0pt\parsep-\parskip\itemsep0.5\topsep}
\newcommand{\listiipar}[2]{\rightmargin0pt\leftmargin#1\labelsep#1\advance\labelsep-#2\itemindent0pt\listiivpar}
\def\thempfn{\ifcase\value{footnote}1\or *\or **\or ***\else\@ctrerr\fi}
\renewcommand\footnoterule{%
  \kern-3\p@
  \hrule\@width1in
  \kern2.6\p@}
\renewcommand{\@biblabel}[1]{[#1]}
\renewenvironment{thebibliography}[1]
     {\renewcommand{\refname}{Литература}%
      \section*{\refname}%
      \@mkboth{\MakeUppercase\refname}{\MakeUppercase\refname}%
      \list{\@biblabel{\@arabic\c@enumiv}}%
           {\itemsep\baselineskip
            \leftmargin\parind
            \settowidth\labelwidth{\@biblabel{#1}}%
            \labelsep\parind\advance\labelsep-\labelwidth
            \@openbib@code
            \usecounter{enumiv}%
            \let\p@enumiv\@empty
            \renewcommand\theenumiv{\@arabic\c@enumiv}}%
      \sloppy
      \clubpenalty4000
      \@clubpenalty\clubpenalty
      \widowpenalty4000%
      \sfcode`\.\@m}
     {\def\@noitemerr
       {\@latex@warning{Empty `thebibliography' environment}}%
      \endlist}
\def\@maketitle{%
  \newpage
  \vskip0.5em%
  УДК \udk%
  \vskip0.5em%
  MSC \msc%
  \vskip1em%
  \begin{center}\bf%
  \let\footnote\thanks%
   {\Large\@author\par}%
   \vskip1.5em%
   {\LARGE\@title\par}%
   \vskip1em%
   {\large\@date}%
  \end{center}%
  \par
  \vskip1.5em}
\def\@title{\@latex@warning@no@line{No \noexpand\title given}}
\sloppy \settowidth{\symskip}{Z} \textwidth=65\symskip
\renewcommand\sectionmark[1]{%
 \markright{%
  \ifnum \c@secnumdepth >\z@
   \thesection. \ %
  \fi
 #1}}%
\renewcommand{\section}{\@startsection{section}{1}{0pt}%
{5.5ex plus .5ex minus .2ex}{1.5ex plus .3ex}%
{\center\normalfont\Large\bfseries\sffamily\bom}}
\renewcommand{\subsection}{\@startsection{subsection}{2}{0pt}%
{4.5ex plus .4ex minus .2ex}{0.75ex plus .2ex}%
{\center\normalfont\large\bfseries\sffamily\bom}}
\renewcommand{\subsubsection}{\@startsection{subsubsection}{3}{0pt}%
{2.5ex plus .5ex minus .2ex}{1ex plus .2ex}%
{\center\normalfont\bfseries\sffamily\bom}}
\newcommand{\Ss}{\textup{\S\,}}
\newcommand{\Sss}{\textup{\S\S\,}}
\def\@postskip@{\hskip.5em\relax}
\def\postsection{.\@postskip@}
\def\postsubsection{.\@postskip@}
\def\postsubsubsection{.\@postskip@}
\def\postparagraph{.\@postskip@}
\def\postsubparagraph{.\@postskip@}
\def\@seccntformat#1{\csname pre#1\endcsname\csname the#1\endcsname\csname post#1\endcsname}
\renewcommand{\thesection}{\textup{\arabic{section}}}
\newcommand{\parr}{\par\addvspace{\defskip}}
\newcommand{\theo}[2]{\newtheorem{#1}{#2}[section]}
\newcommand{\deff}[2]{\newenvironment{#1}{\parr\textbf{#2.}}{\parr}}
\def\@begintheorem#1#2[#3]{%
  \deferred@thm@head{\the\thm@headfont \thm@indent
    \@ifempty{#1}{\let\thmname\@gobble}{\let\thmname\@iden}%
    \@ifempty{#2}{\let\thmnumber\@gobble}{\let\thmnumber\@iden}%
    \@ifempty{#3}{\let\thmnote\@gobble}{\let\thmnote\@iden}%
    \thm@notefont{\bfseries\upshape}%
    \indent%
    \thm@swap\swappedhead\thmhead{#1}{#2}{#3}%
    \the\thm@headpunct
    \thmheadnl 
    \hskip\thm@headsep
  }%
  \ignorespaces}
\renewenvironment{proof}{\setcounter{cas}{0}\parr\pushQED{\qed}\normalfont$\square\quad$}{\setcounter{cas}{0}\popQED\@endpefalse\parr}
\newcommand{\labheadi}[1]{\textup{#1)}}
\newcommand{\labheadii}[1]{\textup{(#1)}}
\renewcommand{\theenumi}{\labhi{enumi}}
\newenvironment{nums}[1]{\renewcommand{\no}{#1}\begin{enumerate}}{\end{enumerate}}
\newcommand{\eqn}[1]{\begin{equation}#1\end{equation}}
\newcommand{\equ}[1]{\begin{equation*}#1\end{equation*}}
\def\LT@makecaption#1#2#3{%
  \LT@mcol\LT@cols c{\hbox to\z@{\hss\parbox[t]\LTcapwidth{%
    \sbox\@tempboxa{#1{#2. }#3}%
    \ifdim\wd\@tempboxa>\hsize
      #1{#2. }#3%
    \else
      \hbox to\hsize{\hfil\box\@tempboxa\hfil}%
    \fi
    \endgraf\vskip\baselineskip}%
  \hss}}}
\newenvironment{casks}{%
  \matrix@check\casks\env@casks
}{%
  \endarray\right.%
}
\def\env@casks{%
  \let\@ifnextchar\new@ifnextchar
  \left\lbrack
  \def\arraystretch{1.2}%
  \array{@{}l@{\quad}l@{}}%
}
\newcounter{numt}
\newcounter{col}
\newcounter{coll}
\renewcommand{\ge}{\geqslant}
\renewcommand{\le}{\leqslant}
\newcommand{\fa}{\,\forall\,}
\newcommand{\bes}{\infty}
\newcommand{\es}{\varnothing}
\newcommand{\subs}{\subset}
\newcommand{\sups}{\supset}
\newcommand{\sm}{\setminus}
\newcommand{\swo}{\mathbin{\triangle}}
\newcommand{\cln}{\colon}
\newcommand{\nl}{\lhd}
\newcommand{\Ra}{\Rightarrow}
\newcommand{\dv}{\smash{\mskip3mu\lower1pt\hbox{\vdots}\mskip3mu}}
\newcommand{\ol}{\overline}
\newcommand{\wt}{\widetilde}
\newcommand{\sst}[1]{\substack{#1}}
\newcommand{\suml}[2]{\sum\limits_{{#1}}^{{#2}}}
\newcommand{\sums}[1]{\sum\limits_{{#1}}}
\newcommand{\oplusl}[2]{\bigoplus\limits_{{#1}}^{{#2}}}
\newcommand{\opluss}[1]{\bigoplus\limits_{{#1}}^{}}
\renewcommand{\caps}[1]{\bigcap\limits_{{#1}}}
\newcommand{\cupl}[2]{\bigcup\limits_{{#1}}^{{#2}}}
\newcommand{\cups}[1]{\bigcup\limits_{{#1}}}
\newcommand{\sqcupl}[2]{\bigsqcup\limits_{{#1}}^{{#2}}}
\newcommand*{\bw}[1]{#1\nobreak\discretionary{}{\hbox{$\mathsurround=0pt #1$}}{}}
\newcommand{\sco}{,\ldots,}
\newcommand{\spl}{\bw+\ldots\bw+}
\newcommand{\seq}{\bw=\ldots\bw=}
\newcommand{\sop}{\bw\oplus\ldots\bw\oplus}
\newcommand{\sti}{\bw\times\ldots\bw\times}
\newcommand{\ha}[1]{\left\langle#1\right\rangle}
\newcommand{\ba}[1]{\bigl\langle#1\bigr\rangle}
\newcommand{\br}[1]{\bigl(#1\bigr)}
\newcommand{\Br}[1]{\Bigl(#1\Bigr)}
\newcommand{\ter}[1]{\textup{(}#1\textup{)}}
\newcommand{\bgm}[1]{\bigl|#1\bigr|}
\newcommand{\Bm}[1]{\Bigl|#1\Bigr|}
\newcommand{\hn}[1]{\left\|#1\right\|}
\newcommand{\hnn}[1]{\|#1\|}
\newcommand{\bn}[1]{\bigl\|#1\bigr\|}
\newcommand{\bs}[1]{\bigl[#1\bigr]}
\newcommand{\bc}[1]{\bigl\{#1\bigr\}}
\newcommand{\BC}[1]{\Bigl\{#1\Bigr\}}
\newcommand{\bb}{\bigm/}
\newcommand{\mbb}{\mathbb}
\newcommand{\mbf}{\mathbf}
\newcommand{\mcl}{\mathcal}
\newcommand{\mfr}{\mathfrak}
\newcommand{\R}{\mbb{R}}
\newcommand{\Z}{\mbb{Z}}
\newcommand{\N}{\mbb{N}}
\newcommand{\T}{\mbb{T}}
\newcommand{\F}{\mbb{F}}
\newcommand{\Cbb}{\mbb{C}}
\newcommand{\Hbb}{\mbb{H}}
\newcommand{\Zc}{\mcl{Z}}
\newcommand{\ggt}{\mfr{g}}
\newcommand{\hgt}{\mfr{h}}
\newcommand{\ga}{\gamma}
\newcommand{\Ga}{\Gamma}
\newcommand{\de}{\delta}
\newcommand{\ep}{\varepsilon}
\newcommand{\la}{\lambda}
\newcommand{\ph}{\varphi}
\newcommand{\om}{\omega}
\newcommand{\Om}{\Omega}
\DeclareMathOperator{\Lie}{Lie}
\DeclareMathOperator{\Ker}{Ker}
\DeclareMathOperator{\Ad}{Ad}
\DeclareMathOperator{\rk}{rk}
\DeclareMathOperator{\id}{id}
\DeclareMathOperator{\sta}{st}
\DeclareMathOperator{\ps}{ps}
\newcommand{\GL}{\mbf{GL}}
\newcommand{\SL}{\mbf{SL}}
\newcommand{\Or}{\mbf{O}}
\newcommand{\SO}{\mbf{SO}}
\newcommand{\glg}{\mfr{gl}}
\newcommand{\bom}{\boldmath}
\newcommand{\phm}[1]{\phantom{#1}}
\newcommand{\thra}{\twoheadrightarrow}
\newcommand{\leqn}{\lefteqn}
\newcommand{\olQ}{\leqn{\hskip1pt\ol{\phm{J}}}Q}
\begin{document}

\author{О.\,Г.\?Стырт}
\title{Топологические и~гомологические свойства\\
пространства орбит\\
компактной линейной группы Ли\\
с~коммутативной связной компонентой}
\date{}
\newcommand{\udk}{512.815.1+512.815.6+512.816.1+512.816.2}
\newcommand{\msc}{17B10+17B45+20G05+20G20+22C05+22E45+22E47}

\maketitle

{\leftskip\parind\rightskip\parind
Исследуется вопрос о~том, является ли факторпространство компактной линейной группы топологическим многообразием, а~также является ли оно гомологическим
многообразием. В~данной работе разобран случай бесконечной группы с~коммутативной связной компонентой.


\smallskip

\textbf{Ключевые слова\:} группа Ли, топологический фактор действия.

\smallskip

The problem in question is whether the quotient space of a~compact linear group is a~topological manifold and whether it is a~homological manifold. In
the paper, the case of an infinite group with commutative connected component is researched.

\smallskip

\textbf{Key words\:} Lie group, topological quotient space of an action.\par}

\section{Введение}\label{introd}

Пусть имеется точное линейное представление компактной группы Ли~$G$ в~вещественном векторном пространстве~$V$. Нас будет интересовать вопрос о~том,
является ли фактор $V/G$ этого действия топологическим многообразием, а~также является ли он гомологическим многообразием. Для краткости будем
в~дальнейшем называть топологическое многообразие просто <<многообразием>>.

Пространство~$V$ обладает $G$\д инвариантным скалярным умножением и~поэтому может (и~будет) рассматриваться как евклидово пространство, на котором
группа~$G$ действует ортогональными операторами. Кроме того, поскольку представление $G\cln V$ точное, можно считать, что $G$\т подгруппа Ли группы Ли
$\Or(V)$, а~представление $G\cln V$ тавтологическое.

\begin{df} Линейный оператор в~пространстве над некоторым полем называется \textit{отражением} (соотв. \textit{псевдоотражением}), если подпространство
его неподвижных точек имеет коразмерность $1$ (соотв.~$2$).
\end{df}

К~данному моменту разобран случай конечной группы~$G$. Именно, в~\cite{MAMich} доказывается, что если группа $G\subs\Or(V)$ конечна и~порождена
псевдоотражениями, то $V/G\cong V$. Обратное же утверждение неверно\: из того, что $|G|<\bes$ и~$V/G\cong V$, не следует, что группа $G\subs\Or(V)$
порождена псевдоотражениями. Более исчерпывающий результат получен относительно недавно в~работе~\cite{Lange}. Приведём её основные результаты
(теорема~\ref{lang}), предварительно определив понятие \textit{группы Пуанкаре}.

\begin{df} Рассмотрим компактную группу Ли $S:=\{v\in\Hbb\cln\hn{v}=1\}\subs\Hbb$ (с~операцией умножения кватернионов), накрывающий гомоморфизм
$S\thra\SO_3$ и~прообраз $\Ga\subs S$ группы вращений додекаэдра при указанном гомоморфизме. \textit{Группой Пуанкаре} называется линейная группа,
полученная ограничением действия $S\cln\Hbb$ левыми сдвигами на подгруппу $\Ga\subs S$.
\end{df}

\begin{theorem}\label{lang} Допустим, что группа $G\subs\Or(V)$ конечна.
\begin{nums}{-1}
\item\label{hom} Если $V/G$\т гомологическое многообразие, то имеются разложения $G=G_0\times G_1\sti G_k$ и~$V=V_0\oplus V_1\sop V_k$
\ter{$k\in\Z_{\ge0}$}, такие что
\begin{nums}{-1}
\item подпространства $V_0,V_1\sco V_k\subs V$ попарно ортогональны и~$G$\д инвариантны\~
\item для любых $i,j=0\sco k$ линейная группа $(G_i)|_{V_j}\subs\Or(V_j)$ тривиальна при $i\ne j$, порождена псевдоотражениями при $i=j=0$ и~изоморфна
группе Пуанкаре при $i=j>0$ \ter{в~частности, $\dim V_j=4$ для всякого $j=1\sco k$}.
\end{nums}
\item\label{top} Если разложения из п.~\ref{hom} существуют, а~условие $V/G\cong V$ не выполняется, то $k=1$ и~$V_0=0$.
\end{nums}
\end{theorem}

\begin{proof} См. предложение~3.13 и~теорему~A в~\cite{Lange}.
\end{proof}

\begin{imp}\label{lan} Предположим, что группа $G\subs\Or(V)$ конечна. Пусть $G_{\ps}\nl G$\т подгруппа, порождённая всеми псевдоотражениями группы~$G$.
Если $V/G$\т гомологическое многообразие, то $G_{\ps}\cdot[G,G]=G$.
\end{imp}

\begin{proof} Хорошо известно, что группа Пуанкаре совпадает со своим коммутантом. Осталось применить теорему~\ref{lang}.
\end{proof}

Через~$G^0$ будем обозначать связную компоненту единицы группы~$G$, а~через~$\ggt$\т её касательную алгебру.

В~данной работе рассматривается случай, когда подгруппа $G^0\subs G$ коммутативна\т что равносильно, является тором. Очевидно, что это свойство
сохраняется при переходе к~подгруппе и~к~факторгруппе.

Для произвольного элемента $g\in G$ введём обозначение
\equ{
\om(g):=\rk(E-g)-\rk\br{E-\Ad(g)}\in\Z.}
Положим $\Om:=\bc{g\in G\cln\om(g)\in\{0,2\}}\subs G$ и~$\Om':=\bc{g\in G\cln\om(g)=4,\,\om(g^5)=0}\subs G$.

На пространстве~$\ggt$ определено $\Ad(G)$\д инвариантное скалярное умножение\~ с~помощью последнего мы в~дальнейшем будем отождествлять пространства
$\ggt$ и~$\ggt^*$.

Для произвольного конечного множества~$P$ векторов в~конечномерном пространстве над некоторым полем, рассматриваемого с~учётом кратностей своих элементов,
количество ненулевых векторов множества~$P$ (с~учётом кратностей) будем обозначать через~$\hn{P}$.

Предположим, что подгруппа $G^0\subs G$ коммутативна, т.\,е. является тором.

Любое неприводимое представление группы~$G^0$ одномерно либо двумерно. Напомним введённое в~\cite[\Ss1]{My1} понятие веса её неприводимого представления.

Произвольное двумерное неприводимое представление группы~$G^0$ обладает $G^0$\д инвариантной комплексной структурой, и~мы можем рассматривать его как
одномерное комплексное представление группы~$G^0$, сопоставив ему естественным образом вес\т гомоморфизм групп Ли $\la\cln G^0\to\T$\т и~отождествив
последний с~его дифференциалом\т вектором $\la\in\ggt^*$. Одномерному представлению группы~$G^0$ сопоставим вес $\la:=0\in\ggt^*$.

Классы изоморфных неприводимых представлений группы~$G^0$ характеризуются весами $\la\in\ggt^*=\ggt$, определёнными с~точностью до знака.

Пусть $P\subs\ggt$\т множество весов $\la\in\ggt$, соответствующее разложению представления $G^0\cln V$ в~прямую сумму неприводимых (с~учётом
кратностей). Множество $P\subs\ggt$ не зависит от выбора указанного разложения (с~точностью до знаков весов). Поскольку представление $G\cln V$ точное,
имеем $\ha{P}=\ggt$.

Напомним определения \textit{$q$\д устойчивых} ($q\in\N$) и~\textit{неразложимых} множеств векторов конечномерных пространств над
полями~\cite[\Ss1]{My1}, необходимые и~в~данной работе.

Разложением множества векторов конечномерного линейного пространства на компоненты будем называть его представление в~виде объединения своих подмножеств,
линейные оболочки которых линейно независимы. Если среди указанных линейных оболочек по крайней мере две нетривиальны, то такое разложение назовём
\textit{собственным}. Будем говорить, что множество векторов \textit{неразложимо}, если оно не допускает ни одного собственного разложения на компоненты.
Всякое множество векторов разлагается на неразложимые компоненты единственным образом (с~точностью до распределения нулевого вектора), причём для любого
его разложения на компоненты каждая компонента является объединением некоторых его неразложимых компонент (вновь с~точностью до нулевого вектора).

\begin{df} Конечное множество векторов конечномерного пространства, рассматриваемое с~учётом кратностей своих элементов, назовём
\textit{$q$\д устойчивым} ($q\in\N$), если его линейная оболочка сохраняется при удалении из него любых векторов в~количестве не более~$q$ (с~учётом
кратностей).
\end{df}

\begin{theorem}\label{1st} Если $V/G$\т многообразие, то множество $P\subs\ggt$ является $1$\д устойчивым.
\end{theorem}

\begin{proof} См. предложение~2.2 в~\cite[\Ss2]{My1}.
\end{proof}

В~\cite[\Ss8]{My1} описывается метод сопоставления каждой компактной линейной группе с~коммутативной связной компонентой, $1$\д устойчивым множеством
весов и~фактором~$M$ компактной линейной группы с~коммутативной связной компонентой, $2$\д устойчивым множеством весов и~фактором, гомеоморфным~$M$.
Поэтому в~дальнейшем мы будем рассматривать случай $2$\д устойчивого множества $P\subs\ggt$.

В~п.~\ref{decs} будет доказана следующая теорема.

\begin{theorem}\label{submain} Допустим, что $V/G$\т гомологическое многообразие, а~множество $P\subs\ggt$ является $2$\д устойчивым. Тогда существуют
разложения $G=G_0\times G_1\sti G_p$ и~$V=V_0\oplus V_1\sop V_p$ \ter{$p\in\N$}, такие что
\begin{nums}{-1}
\item подпространства $V_0,V_1\sco V_p\subs V$ попарно ортогональны и~$G$\д инвариантны\~
\item для произвольных $i,j=0\sco p$ линейная группа $(G_i)|_{V_j}\subs\Or(V_j)$ тривиальна при $i\ne j$, конечна при $i=j=0$ и~бесконечна при $i=j>0$\~
\item для всякого $l=0\sco p$ фактор $V_l/G_l$ является гомологическим многообразием\~
\item для произвольного $l=1\sco p$ множество весов представления $G_l\cln V_l$ неразложимо, $2$\д устойчиво и~не содержит нулей.
\end{nums}
\end{theorem}

Если разложения из формулировки теоремы~\ref{submain} существуют, то $V/G$\т гомологическое многообразие\~ если при этом каждый из факторов $V_l/G_l$,
$l=0\sco p$, является многообразием, то $V/G$\т многообразие.

Топологические свойства факторпространства конечной линейной группы описываются теоремой~\ref{lang}, и, таким образом, требуется исследовать случай
представления с~неразложимым $2$\д устойчивым множеством весов, не содержащим нулей, чему будут посвящены теоремы \ref{main} и~\ref{main1}.

Далее до конца введения будем предполагать, что множество $P\subs\ggt$ неразложимо, $2$\д устойчиво и~не содержит нулей. Положим $m:=\dim G\in\N$. Ввиду
соотношения $0\notin P$, пространство~$V$ обладает $G^0$\д инвариантной комплексной структурой.

\begin{theorem}\label{main} Допустим, что $m>1$. Следующие условия \ref{to}---\ref{crit} эквивалентны\:
\begin{nums}{-1}
\item\label{to} $V/G$\т многообразие\~
\item\label{ho} $V/G$\т гомологическое многообразие\~
\item\label{crit} выполняются нижеприведённые условия \ref{pm2}---\ref{finst}\:
\begin{nums}{-1}\renewcommand{\theenumi}{}
\item\label{pm2} $\hn{P}=m+2$\~
\item\label{oplu} пространство~$V$ разлагается в~прямую сумму попарно ортогональных двумерных неприводимых $G^0$\д инвариантных подпространств
$W_1\sco W_{m+2}\subs V$, переставляемых группой~$G$, причём $G(W_1\sop W_m)=W_1\sop W_m$ и, кроме того, $(m>2)\Ra(GW_j=W_j\fa j=1\sco m+2)$\~
\item\label{adj} найдётся элемент $g\in G$, такой что $\Ad(g)=-E$ и~$gW_j=W_j\fa j=1\sco m+2$\~
\item\label{finst} если $v\in V$ и~$|G_v|<\bes$, то $G_v=\ha{G_v\cap\Om}$.
\end{nums}
\end{nums}
\end{theorem}

При $m=1$ будем дополнительно предполагать, что группа $G\subs\Or(V)$ не содержит комплексных отражений,\т к~этому можно свести произвольный случай
(см.~\cite[\Sss3,\,7]{My1}).

\begin{theorem}\label{main1} Допустим, что $m=1$, а~группа $G\subs\Or(V)$ не содержит комплексных отражений. Следующие условия эквивалентны\:
\begin{nums}{-1}
\item\label{to1} $V/G$\т многообразие\~
\item\label{ho1} $V/G$\т гомологическое многообразие\~
\item\label{crit1} $\dim_{\Cbb}V=\hn{P}=3$, $\Ad(G)=\{\pm E\}$, $G=\ha{\Om}$, а~представление $G\cln V$ приводимо.
\end{nums}
\end{theorem}

В~каждой из теорем \ref{main} и~\ref{main1} импликация $\text{\ref{crit}}\Ra\text{\ref{to}}$ доказана (см.~\cite[\Ss1]{My1}, теоремы 1.3 и~1.5
соответственно), а~импликация $\text{\ref{to}}\Ra\text{\ref{ho}}$ очевидна. Осталось доказать импликации $\text{\ref{ho}}\Ra\text{\ref{crit}}$ в~теоремах
\ref{main} и~\ref{main1}, что будет проделано в~п.~\ref{indec}.

\section{Обозначения и~вспомогательные факты}\label{facts}

В~этом параграфе приведён ряд вспомогательных обозначений и~утверждений, в~том числе заимствованных из~\cite{Lange,My1} (все новые утверждения\т
с~доказательствами).

\begin{lemma}\label{prop} Пусть $X$ и~$Y$\т топологические пространства, а~$n$\т натуральное число.
\begin{nums}{-1}
\item Если $X$\т односвязная гомологическая $n$\д сфера, то $X\cong S^n$.
\item Конус над пространством~$X$ является гомологическим $(n+1)$\д многообразием тогда и~только тогда, когда $X$\т гомологическая $n$\д сфера.
\item Пространство $X\times Y$ является гомологическим многообразием тогда и~только тогда, когда $X$ и~$Y$\т гомологические многообразия.
\end{nums}
\end{lemma}

\begin{proof} См. теорему~2.3 и~лемму~2.6 в~\cite[\Ss2]{Lange}.
\end{proof}

\subsection{Элементарные св\'едения линейной алгебры}

\begin{stm} Пусть $g$\т антилинейный оператор в~$n$\д мерном комплексном пространстве~$V$. Тогда $\dim_{\R}V^g\le n$\т что равносильно,
$\rk_{\R}(E-g)\ge n$.
\end{stm}

\begin{proof} Вытекает из очевидных соотношений $V^{-g}=iV^g\subs V$ и~$V^g\cap V^{-g}=0$.
\end{proof}

Напомним основные свойства $q$\д устойчивых ($q\in\N$) конечных множеств векторов конечномерных пространств над полями, которые (множества)
рассматриваются с~учётом кратностей своих элементов~\cite[\Ss1]{My1}.
\begin{nums}{-1}
\item Добавление и~удаление нулевых векторов, а~также умножение векторов на ненулевые элементы поля не влияют на $q$\д устойчивость множества.
\item Образ $q$\д устойчивого множества при линейном отображении пространств является $q$\д устойчивым множеством. В~частности, если некоторое множество
линейных функций на пространстве $q$\д устойчиво, то множество ограничений всех линейных функций данного множества на произвольное подпространство также
$q$\д устойчиво.
\item Для любого разложения произвольного $q$\д устойчивого множества на компоненты (необязательно неразложимые) каждая из компонент является
$q$\д устойчивой.
\item Всякое $q$\д устойчивое множество с~$m$\д мерной линейной оболочкой ($m\in\N$) содержит не менее $m+q$ ненулевых векторов.
\item Всякое $q$\д устойчивое множество с~$m$\д мерной линейной оболочкой ($m\in\N$), содержащее ровно $m+q$ ненулевых векторов, неразложимо, а~любые его
ненулевые векторы в~количестве не более~$m$ линейно независимы.
\end{nums}

Для конечномерного представления конечной группы над произвольным полем следующие условия эквивалентны\:
\begin{nums}{-1}
\item\label{inv} подпространство инвариантов тривиально\~
\item\label{orb} сумма векторов в~любой орбите равна нулю.
\end{nums}
Представление, удовлетворяющее условиям \ref{inv} и~\ref{orb}, будем для краткости называть \textit{представлением без инвариантов} или
\textit{представлением, не имеющим инвариантов}.

Очевидно, что любое подпредставление представления без инвариантов также не имеет инвариантов.

\begin{stm}\label{no1} Никакая группа нечётного порядка не обладает одномерным вещественным представлением без инвариантов.
\end{stm}

\begin{proof} Любое одномерное вещественное представление группы нечётного порядка является тождественным, что немедленно влечёт требуемое.
\end{proof}

\begin{imp} Всякое неприводимое вещественное представление коммутативной группы нечётного порядка, не имеющее инвариантов, двумерно.
\end{imp}

\begin{imp}\label{diev} Если некоторая коммутативная группа имеет нечётный порядок, то размерность любого её вещественного представления без инвариантов
чётна.
\end{imp}

\begin{lemma}\label{impr} Рассмотрим произвольное представление конечной группы~$\Ga$ в~пространстве~$W$ над полем~$\F$, не имеющее инвариантов. Далее,
пусть $W_1\sco W_p\subs W$ \ter{$p\in\N$}\т линейно независимые подпространства, переставляемые группой~$\Ga$, а~$\Ga'$\т подгруппа
$\{\ga\in\Ga\cln\ga W_1=W_1\}\subs\Ga$.
\begin{nums}{-1}
\item Представление $\Ga'\cln W_1$ не имеет инвариантов.
\item Если $\F=\R$, а~$\Ga$\т группа нечётного порядка, то $\dim W_1\ne1$.
\end{nums}
\end{lemma}

\begin{proof} Для всякого вектора $w\in W_1$ имеем $\Ga'w\subs W_1$ и~$(\Ga w)\sm(\Ga'w)\subs W_2\sop W_p$, причём сумма векторов в~орбите $\Ga w\subs W$
равна нулю, вследствие чего сумма векторов подмножества $\Ga'w\subs W_1$ равна нулю. Значит, $\Ga'\cln W_1$\т представление без инвариантов. Если
$\F=\R$, а~$\Ga$\т группа нечётного порядка, то $\Ga'\subs\Ga$\т группа нечётного порядка, и, в~силу утверждения~\ref{no1}, $\dim W_1\ne1$.
\end{proof}

\begin{imp}\label{old} Рассмотрим произвольное вещественное представление группы~$\Ga$ нечётного порядка в~пространстве~$W$, не имеющее инвариантов.
Всякая орбита действия группы~$\Ga$ на множестве прямых пространства~$W$ состоит из нечётного числа линейно зависимых прямых.
\end{imp}

\subsection{Представления компактных групп}

Допустим, что имеется евклидово пространство~$V$, компактная группа Ли~$G$ с~касательной алгеброй~$\ggt$, линейное представление $G\to\Or(V)$ и~его
дифференциал\т представление $\ggt\cln V$. Отображение факторизации $V\to V/G$ будем обозначать через~$\pi$, стабилизатор (соотв. стационарную
подалгебру) вектора $v\in V$\т через~$G_v$ (соотв. через~$\ggt_v$), а~подпространство $(\ggt v)^{\perp}\subs V$ ($v\in V$)\т через~$N_v$.

Пусть $v\in V$\т произвольный вектор. Имеем $\ggt_v=\Lie G_v$, $G_v(\ggt v)=\ggt v$ и~$G_vN_v=N_v$. Положим $M_v:=N_v\cap(N_v^{G_v})^{\perp}\subs N_v$.
Ясно, что $N_v=N_v^{G_v}\oplus M_v\subs V$ и~$G_vM_v=M_v$.

\begin{prop} Если $V/G$\т многообразие, то найдётся связная $G$\д инвариантная окрестность нуля $U\subs V$, для которой фактор $U/G$ гомеоморфен
открытому шару~$B$ размерности $\dim(V/G)$.
\end{prop}

\begin{proof} Точка $\pi(0)\in V/G$ обладает окрестностью, гомеоморфной~$B$. Это означает, что существует $G$\д инвариантная окрестность нуля $U\subs V$,
для которой $U/G\cong B$. Пусть $U^0\subs U$\т компонента линейной связности окрестности~$U$, содержащая точку~$0$. Тогда подмножества $U^0$ и~$U\sm U^0$
окрестности~$U$ открыты и~$G$\д инвариантны. Подмножества $\pi(U^0)\subs\pi(U)$ и~$\pi(U\sm U^0)\subs\pi(U)$ являются открытыми, имеют пустое пересечение
и~дают в~объединении связный фактор $\pi(U)=U/G\cong B$. Следовательно, $U=U^0$ и, таким образом, окрестность $U\subs V$ связна.
\end{proof}

\begin{lemma}\label{locman} Если в~факторе $V/G$ некоторая окрестность точки $\pi(0)$ является \ter{гомологическим} многообразием, то $V/G$\т
\ter{гомологическое} многообразие.
\end{lemma}

\begin{proof} В~пространстве~$V$ найдутся $G$\д инвариантная окрестность нуля~$U$, такая что $U/G$\т (гомологическое) многообразие, и~открытый шар
$B\subs U$ с~центром в~нуле. Имеем $GB=B$, причём $B/G$\т (гомологическое) многообразие. Наконец, существует $G$\д эквивариантный гомеоморфизм $V\to B$,
что влечёт требуемое.
\end{proof}

\begin{theorem}\label{slice} Пусть $v\in V$\т некоторый вектор. Фактор $V/G$ является \ter{гомологическим} многообразием локально в~точке $\pi(v)$ тогда
и~только тогда, когда $N_v/G_v$\т \ter{гомологическое} многообразие.
\end{theorem}

\begin{proof} В~силу теоремы о~слайсе \cite[~гл.~II,~~\S~4\т 5]{Bredon}, фактор $V/G$ локально в~точке $\pi(v)$ гомеоморфен фактору $N_v/G_v$ локально
в~нуле. Осталось применить лемму~\ref{locman}.
\end{proof}

\begin{imp}\label{slim} Пусть $v\in V$\т некоторый вектор. Если $V/G$\т гомологическое многообразие, то $M_v/G_v$\т гомологическое многообразие.
\end{imp}

\begin{proof} Имеем $N_v/G_v\cong N_v^{G_v}\times(M_v/G_v)$. Далее, в~силу теоремы~\ref{slice}, $N_v/G_v$\т гомологическое многообразие.
Осталось применить лемму~\ref{prop}.
\end{proof}

\begin{stm}\label{Mv} В~любом $G^0$\д инвариантном подпространстве $V'\subs V$ существует вектор~$v$, для которого $M_v\subs(V')^{\perp}$.
\end{stm}

\begin{proof} См. утверждение~2.2 в~\cite[\Ss2]{My1}.
\end{proof}

Пусть $v\in V$\т некоторый вектор, такой что $|G_v|<\bes$. Тогда $\ggt_v=0$, и~для любого элемента $g\in G_v$ имеем
\equ{
\dim\br{(E-g)N_v}=\dim\br{(E-g)V}-\dim\br{(E-g)(\ggt v)}=\rk(E-g)-\rk\br{E-\Ad(g)}=\om(g).}
В~частности, элемент $g\in G_v$ принадлежит подмножеству $\Om\subs G$, если и~только если он действует на подпространстве $N_v\subs V$ псевдоотражением
либо тождественно.

\begin{lemma}\label{refst} Если $V/G$\т гомологическое многообразие, то для всякого вектора $v\in V$, такого что $|G_v|<\bes$, имеем
$\ha{G_v\cap\Om}\cdot[G_v,G_v]=G_v$.
\end{lemma}

\begin{proof} Вытекает из теоремы~\ref{slice} и~следствия~\ref{lan}.
\end{proof}

\section{Доказательства результатов}

На протяжении дальнейшей части работы будем предполагать, что $G^0\cong\T^m$, $m\in\N$, и~что представление $G\cln V$ точное. Стабилизатор общего
положения указанного представления конечен, вследствие чего $\dim(V/G)=\dim V-\dim G=\dim V-m$. Множество $P\subs\ggt$ весов представления
$G\cln V$ удовлетворяет равенству $\ha{P}=\ggt$. Легко видеть, что $\Ad(G^0)=\{E\}$ и~$\bgm{\Ad(G)}<\bes$.

Изотипную компоненту представления $G^0\cln V$, соответствующую неприводимым представлениям с~произвольным весом $\la\in P$, обозначим через~$V_{\la}$.
Для всякого $\la\in P\sm\{0\}$ изотипная компонента $V_{\la}\subs V$ обладает структурой комплексного пространства, на котором группа~$G^0$ действует
скалярными линейными операторами. Пространство~$V$ разлагается в~прямую сумму своих попарно ортогональных подпространств $V_{\la}$ ($\la\in P$),
переставляемых группой~$G$.

Имеем $\Ad(G)P=P$ и~$gV_{\la}=V_{\Ad(g)\la}$ ($\la\in P$, $g\in G$). В~частности, $GV_0=V_0$. Кроме того, $V_0=V^{G^0}$. Подпространство
$V_0^{\perp}\subs V$ разлагается в~прямую сумму попарно ортогональных изотипных компонент $V_{\la}\subs V$ ($\la\in P\sm\{0\}$), переставляемых
группой~$G$, и~обладает $G^0$\д инвариантной структурой комплексного пространства размерности~$\hn{P}$.

Пусть $\la\in P\sm\{0\}$\т произвольный вес. Элемент $g\in G$ переводит в~себя изотипную компоненту $V_{\la}\subs V$, если и~только если
$\Ad(g)\la=\pm\la$, действуя на ней при $\Ad(g)\la=\la$ (соотв. при $\Ad(g)\la=-\la$) линейно (соотв. антилинейно) над полем~$\Cbb$.

Пусть $S\subs V$\т единичная сфера евклидова пространства~$V$, $D\subs G$\т подмножество
$\{g\in G\cln V^g\ne0\}=\{g\in G\cln S^g\ne\es\}=\cups{\sst{v\in V\\v\ne0}}G_v=\cups{v\in S}G_v$, а~$G_{\sta}$\т нормальная подгруппа Ли
$\ha{G^0\cup D}\subs G$. Группа Ли $G/G_{\sta}$ конечна, а~действие $(G/G_{\sta})\cln(S/G_{\sta})$ свободное.

\begin{prop}\label{ist} Если $g\in G$ и~$\ggt^{\Ad(g)}\ne0$, то $g\in G_{\sta}$.
\end{prop}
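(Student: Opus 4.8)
The plan is to reduce everything to finding an element of the coset $gG^0$ that fixes a nonzero vector. Since $G^0\subs G_{\sta}$ by the very definition $G_{\sta}=\ha{G^0\cup D}$, it suffices to produce $t\in G^0$ with $V^{gt}\ne0$: then $gt\in D\subs G_{\sta}$ and hence $g=(gt)t^{-1}\in G_{\sta}$. So from now on the goal is, given $g$ with $\ggt^{\Ad(g)}\ne0$, to choose $t\in G^0$ making $gt$ fix a nonzero vector. Note that $\Ad(gt)=\Ad(g)$ for all $t\in G^0$, because $\Ad(G^0)=\{E\}$, so this adjustment does not disturb the hypothesis.

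First I would choose a convenient weight. As $\ha{P}=\ggt$ while $\ggt^{\Ad(g)}\ne0$, the weights cannot all be orthogonal to $\ggt^{\Ad(g)}$; hence there are $\la\in P$ and $Y\in\ggt^{\Ad(g)}$ with $(\la,Y)\ne0$, so in particular $\la\ne0$. Let $r\in\N$ be least with $\Ad(g)^r\la=\la$ and put $\mu:=\suml{i=0}{r-1}\Ad(g)^i\la$. Using $\Ad(g)Y=Y$ and the $\Ad(G)$-invariance of the inner product one finds $(\mu,Y)=r(\la,Y)\ne0$, so $\mu\ne0$, while $\Ad(g)\mu=\mu$. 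This single inequality is what tames the sign ambiguity inherent in real weight spaces: were two of the weights $\Ad(g)^i\la$ opposite to one another, the orbit would be symmetric under negation and $\mu$ would vanish. Hence $\la,\Ad(g)\la\sco\Ad(g)^{r-1}\la$ are pairwise distinct and pairwise non-opposite, and the subspaces $V_{\Ad(g)^i\la}$ are $r$ distinct summands of $V$.

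Next I would work on the $g$- and $G^0$-invariant subspace $W:=\oplusl{i=0}{r-1}V_{\Ad(g)^i\la}$. Equip each $V_\nu$ with the complex structure for which $t\in G^0$ acts by multiplication by the unit scalar $\chi_\nu(t)\in\T$ of weight $\nu$; a check that $g$ commutes with the $G^0$-action shows that the isomorphisms $g\colon V_{\Ad(g)^i\la}\to V_{\Ad(g)^{i+1}\la}$ (recall $gV_\la=V_{\Ad(g)\la}$) are $\Cbb$-linear, so $g$ cyclically permutes the $r$ summands $\Cbb$-linearly. A short computation, in which the scalars $\chi_\nu(t)$ pull out cleanly thanks to $\Cbb$-linearity, then yields
\equ{(gt)^r|_{V_\la}=\chi_\mu(t)\cdot\br{g^r|_{V_\la}}\quad\fa t\in G^0.}
As $g\in\Or(V)$ is $\Cbb$-linear around this cycle, $g^r|_{V_\la}$ is unitary and therefore has an eigenvalue $z\in\T$; and since $\mu\ne0$ lies in the character lattice of $G^0$, the character $\chi_\mu\colon G^0\to\T$ is surjective. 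Choosing $t$ with $\chi_\mu(t)=z^{-1}$ makes $1$ an eigenvalue of $(gt)^r|_{V_\la}$.

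Finally I would take a nonzero $v_0\in V_\la$ with $(gt)^rv_0=v_0$ and form $v:=\suml{j=0}{r-1}(gt)^jv_0\in W$. Then $(gt)v=v$, and the component of $v$ in $V_\la$ equals $v_0\ne0$, the remaining summands lying in the distinct spaces $V_{\Ad(g)^j\la}$, $j=1\sco r-1$; hence $v\ne0$, so $V^{gt}\ne0$ and $gt\in D$, which completes the reduction. I expect the main obstacle to be the careful handling of the real (unsigned) weight spaces: justifying the displayed identity and the $\Cbb$-linearity of $g$ rests on the matched complex structures, and it is precisely the guarantee $\mu\ne0$—forcing the $\Ad(g)$-orbit of $\la$ to avoid negation symmetry—that simultaneously makes $\chi_\mu$ surjective, keeps the identity clean, and secures the nonvanishing of $v$.
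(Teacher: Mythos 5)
Your proof is correct, but it takes a genuinely different route from the paper's. The paper passes immediately to the centralizer $H:=\Zc_G(g)$: the hypothesis $\ggt^{\Ad(g)}\ne0$ says exactly that $\dim H>0$, and since the subtorus $H^0\subs G^0$ acts faithfully, it has a nonzero weight, hence an isotypic component $V'$ on which $H^0$ acts as the full circle of complex scalars $\T E$. Because $g$ centralizes $H^0$, it preserves $V'$ and acts there $\Cbb$\hbox{-}linearly, so a complex eigenvector $v\in V'\sm\{0\}$ satisfies $gv\in\T v=H^0v$, giving $g\in H^0G_v\subs G_{\sta}$ in one stroke. You instead keep the whole torus $G^0$ and must therefore control how $g$ permutes its weight spaces; your orbit sum $\mu=\sum_i\Ad(g)^i\la$ with $(\mu,Y)=r(\la,Y)\ne0$ is precisely the substitute for the centralizer\textemdash it rules out the sign cancellation in the real weight spaces, makes the $r$ components distinct, and yields the nontrivial character $\chi_\mu$ that absorbs the eigenvalue of $g^r|_{V_\la}$. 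Both arguments close identically (complex eigenvector, eigenvalue absorbed into the torus, hence $g\in G^0\cdot D\subs G_{\sta}$), but the centralizer trick makes the cyclic bookkeeping of your middle section unnecessary, while your version has the minor virtue of exhibiting a fixed vector of $gt$ explicitly. The steps you flag as delicate\textemdash matching the complex structures so that $g$ is $\Cbb$\hbox{-}linear around the cycle, surjectivity of $\chi_\mu$ for $\mu\ne0$ in the character lattice, and the nonvanishing of $v$ via its $V_\la$\hbox{-}component\textemdash all check out.
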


\begin{proof} Рассмотрим подгруппу Ли $H:=\Zc_G(g)\subs G$.

По условию $\hgt:=\Lie H=\ggt^{\Ad(g)}\ne0$, откуда $\dim H>0$. Поскольку представление $H^0\cln V$ точное, некоторая его изотипная компонента
$V'\subs V$ обладает комплексной структурой, такой что $(H^0)|_{V'}=\T E\subs\GL_{\Cbb}(V')$. Далее, $g\in\Zc(H)\subs H$, $\Ad_H(g)=\id_{\hgt}$,
вследствие чего $gV'=V'$ и~$g|_{V'}\in\GL_{\Cbb}(V')$. Найдётся вектор $v\in V'\sm\{0\}$, являющийся собственным для оператора
$g|_{V'}\in\GL_{\Cbb}(V')$. Имеем $gv\in\T v=H^0v$, $g\in H^0G_v\bw\subs G_{\sta}$.
\end{proof}

\begin{imp}\label{kea} Справедливо включение $\Ker\Ad\subs G_{\sta}$.
\end{imp}

\begin{prop}\label{fin} Если $m=1$ и~$V_0=0$, то \ter{априори абстрактная} подгруппа $H:=\ba{(\Ker\Ad)\cap D}\nl G$ конечна.
\end{prop}

\begin{proof} Обозначим через~$n$ число $\hn{P}\in\N$, а~через~$\ph$\т изоморфизм групп Ли $G^0\to\T$.

Пространство~$V$ обладает структурой комплексного пространства с~базисом $\{e_1\sco e_n\}$, удовлетворяющей для некоторых натуральных чисел $k_1\sco k_n$
условиям $G^0\subs\Ker\Ad\subs\GL_{\Cbb}(V)$ и~$ge_j=\br{\ph(g)}^{k_j}\cdot e_j\in V$, где $g\in G^0$ и~$j=1\sco n$. Легко видеть, что
$(\Ker\Ad)\cap D\subs H\subs\Ker\Ad\subs\GL_{\Cbb}(V)$.

Положим $k_0:=|G/G^0|\in\N$, $k:=k_1\dots k_n\in\N$ и~$k':=k_1\spl k_n\in\N$.

Рассмотрим произвольный элемент $h\in(\Ker\Ad)\cap D$. Ясно, что $g:=h^{k_0}\in G^0\cap D$. Найдётся число $j\in\{1\sco n\}$, удовлетворяющее
равенству $\br{\ph(g)}^{k_j}=1$. Заметим, что $g^{k_j}=E$, $g^k=E$, $h^{k_0k}=E$, $(\det_{\Cbb}h)^{k_0k}=\det_{\Cbb}(h^{k_0k})=1$.

Таким образом, равенство $(\det_{\Cbb}h)^{k_0k}=1$ выполняется для любого $h\in(\Ker\Ad)\cap D$, а~значит, и~для любого $h\in H$. В~частности, если
$h\in G^0\cap H$\т произвольный элемент, то $1=(\det_{\Cbb}h)^{k_0k}=\br{\ph(h)}^{k'k_0k}$. Отсюда $|G^0\cap H|=\bgm{\ph(G^0\cap H)}\le k'k_0k<\bes$,
$|H|<\bes$.
\end{proof}

Далее будем считать, что $V/G$\т гомологическое многообразие, а~множество $P\subs\ggt$ является $2$\д устойчивым.

\subsection{Начальные свойства}

Имеем $\hn{P}\ge m+2$, $\dim V\ge2m+4>4$, $\dim S>3$. Поэтому сфера $S\subs V$ связна и~односвязна. Согласно лемме~\ref{prop}, $M:=S/G$\т гомологическая
сфера. Кроме того, $\dim(V/G)=\dim V-m\ge(2m+4)-m>4$, $\dim M>3$. Отсюда (вновь см. лемму~\ref{prop})
\begin{gather}
\br{\pi_1(M)}\bb\bs{\pi_1(M),\pi_1(M)}\cong H_1(M)=0;\label{pi1}\\
\br{\pi_1(M)=\{e\}}\quad\Ra\quad\br{\pi_2(M)=0}\label{pi2}.
\end{gather}

\begin{lemma}\label{comm} Группа $G/G_{\sta}$ совпадает со своим коммутантом.
\end{lemma}

\begin{proof} Поскольку сфера $S\subs V$ связна, факторпространство $S/G_{\sta}$ связно. Далее, как уже отмечалось, $|G/G_{\sta}|<\bes$, а~действие
$(G/G_{\sta})\cln(S/G_{\sta})$ свободное. Фактор данного действия гомеоморфен~$M$, а~отображение факторизации $(S/G_{\sta})\thra M$ является накрытием
со слоем $G/G_{\sta}$. Значит, существует сюръективный гомоморфизм $\pi_1(M)\thra G/G_{\sta}$. Ввиду~\eqref{pi1}, $\pi_1(M)=\bs{\pi_1(M),\pi_1(M)}$, что
влечёт требуемое.
\end{proof}

\begin{theorem}\label{dgen} Если $m=1$ и~$V_0=0$, то $\Ad(D)\ni-E$.
\end{theorem}

\begin{proof} Допустим, что утверждение теоремы не выполняется, т.\,е. что $D\subs\Ker\Ad$.

В~силу предложения~\ref{fin}, $H:=\ha{D}\nl G$\т конечная подгруппа Ли, а~$G':=G/H$\т одномерная группа Ли. Согласно следствию~\ref{kea},
$\Ker\Ad\subs G_{\sta}=\ha{G^0\cup D}\subs\Ker\Ad$, $\Ker\Ad=\ha{G^0\cup D}=G^0H$, $G'/(G')^0\cong G/(G^0H)=G/(\Ker\Ad)\cong\Ad(G)$.

Напомним, что сфера $S\subs V$ связна и~односвязна. Из этого, а~также из соотношений $D=\{g\in G\cln S^g\ne\es\}\subs G$, $H=\ha{D}\nl G$ и~$|H|<\bes$
вытекает, что факторпространство $S/H$ связно и~односвязно, а~действие $G'\cln(S/H)$ свободное. Фактор данного действия гомеоморфен~$M$, отображение
факторизации $(S/H)\thra M$ является локально тривиальным расслоением со слоем~$G'$, и~мы можем рассмотреть участок точной гомотопической
последовательности $\pi_2(M)\to\pi_1(G')\to\pi_1(S/H)\to\pi_1(M)\to G'/(G')^0\to0$. Поскольку $\pi_1(S/H)=\{e\}$ и~$\pi_1(G')\cong\Z$, имеем
$\pi_2(M)\ne0$ и~$\pi_1(M)\cong G'/(G')^0\cong\Ad(G)$. Наконец, ввиду~\eqref{pi1}, $\pi_1(M)=\bs{\pi_1(M),\pi_1(M)}\cong\bs{\Ad(G),\Ad(G)}=\{E\}$,
$\pi_1(M)=\{e\}$. Получили противоречие с~\eqref{pi2}.
\end{proof}

\begin{theorem}\label{wgen} Если $m=1$ и~$V_0=0$, то $\Ad(\Om)\ni-E$.
\end{theorem}

\begin{proof} Допустим, что утверждение теоремы не выполняется, т.\,е. что $\Om\subs\Ker\Ad$.

Имеем $\ha{\Om}\subs\Ker\Ad$ и, кроме того, $[G,G]\subs\Ker\Ad$, откуда $\ha{\Om}\cdot[G,G]\subs\Ker\Ad$.

Рассмотрим произвольный вектор $v\in S$. Как легко заметить, $|G_v|<\bes$. Согласно лемме~\ref{refst},
$G_v=\ha{G_v\cap\Om}\cdot[G_v,G_v]\subs\ha{\Om}\cdot[G,G]\subs\Ker\Ad$.

Мы видим, что для всякого $v\in S$ выполнено включение $G_v\subs\Ker\Ad$. Следовательно, $D=\cups{v\in S}G_v\subs\Ker\Ad$, что невозможно в~силу
теоремы~\ref{dgen}.
\end{proof}

\begin{theorem}\label{dim3} Если $m=1$ и~$V_0=0$, то $\Ad(G)=\{\pm E\}$ и~$\hn{P}=3$.
\end{theorem}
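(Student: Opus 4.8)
The plan is to prove the two assertions in turn: $\Ad(G)=\{\pm E\}$ will follow at once from Theorem~\ref{wgen}, while $\hn{P}=3$ will come from a dimension count applied to the distinguished element produced by that theorem.

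For the first assertion, compactness of $G$ gives an $\Ad(G)$-invariant inner product on $\ggt$, so $\Ad(G)\subseteq\Or(\ggt)$; since $m=\dim\ggt=1$, this means $\Ad(G)\subseteq\{\pm E\}$. By Theorem~\ref{wgen} there is $g_0\in\Om$ with $\Ad(g_0)=-E$, so $-E\in\Ad(G)$; as $-E\ne E$, we conclude $\Ad(G)=\{\pm E\}$.

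The substance is the equality $\hn{P}=3$. From $2$-supportedness of $P$ (a $2$-supported family in the $1$-dimensional space $\ggt$ has at least $m+2=3$ elements) we get $\hn{P}\ge3$. For the reverse estimate I would exploit the element $g_0\in\Om$ above. Since $E-\Ad(g_0)=2E$ on the line $\ggt$, we have $\rk\br{E-\Ad(g_0)}=1$, whence
\equ{
\om(g_0)=\rk(E-g_0)-\rk\br{E-\Ad(g_0)}=\dim V-\dim V^{g_0}-1.
}
The crucial inequality is $\dim V^{g_0}\le\tfrac12\dim V$. To obtain it, fix $0\ne X\in\ggt$ with skew operator $\rho(X)$ on $V$; as $V_0=V^{G^0}=0$ the weight $0$ is absent and $\rho(X)$ is invertible. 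From $\Ad(g_0)X=-X$ we get $g_0\rho(X)=-\rho(X)g_0$, so $\rho(X)$ maps $V^{g_0}$ injectively into $V^{-g_0}$; since $V^{g_0}\cap V^{-g_0}=0$, this forces $2\dim V^{g_0}\le\dim V$.

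Combining the display with this bound gives $\om(g_0)\ge\tfrac12\dim V-1$. But $g_0\in\Om$ means $\om(g_0)\le2$, so $\dim V\le6$ and $\dim_{\Cbb}V\le3$. Since always $\hn{P}\le\dim_{\Cbb}V$ and we have shown $\hn{P}\ge3$, all three numbers coincide and equal $3$; in particular $\hn{P}=3$ (and, as a by-product, $\dim_{\Cbb}V=3$ with each weight occurring once). The only genuinely delicate step is the inequality $\dim V^{g_0}\le\tfrac12\dim V$: it rests entirely on the hypothesis $V_0=0$, which makes $\rho(X)$ invertible and thereby turns the anticommutation $g_0\rho(X)=-\rho(X)g_0$ into an embedding of $V^{g_0}$ into the complementary subspace $V^{-g_0}$. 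Everything else is rank bookkeeping.
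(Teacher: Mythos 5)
Your proof is correct and is essentially the paper's own argument: both take the element $g_0\in\Om$ with $\Ad(g_0)=-E$ supplied by Theorem~\ref{wgen}, bound $\rk(E-g_0)=\om(g_0)+\rk\br{E-\Ad(g_0)}\le 3$ from above, and bound it from below by $\dim_{\Cbb}V=\hn{P}\ge m+2=3$ using that $g_0$ anticommutes with an invertible complex-linear operator --- the paper phrases this as antilinearity of $g_0$ on the complex space $V$ and invokes its preliminary statement $\dim_{\R}V^g\le n$ from \S\,2.1, whose proof (via $V^{-g}=iV^g$) is your $\rho(X)$\clei argument with multiplication by $i$ in place of $\rho(X)$. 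The only blemish is the parenthetical ``each weight occurring once'', which does not follow since $\hn{P}=3$ is a count with multiplicities; but that claim is not part of the theorem and is not used.
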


\begin{proof} Согласно теореме~\ref{wgen}, найдётся элемент $g\in\Om$, такой что $\Ad(g)=-E$. Имеем $\Ad(G)=\{\pm E\}$, $n:=\hn{P}\ge3$,
$\rk\br{E-\Ad(g)}=1$ и~$\om(g)\le2$, откуда $\rk(E-g)\le3$. Пространство~$V$ обладает структурой $n$\д мерного комплексного пространства, на котором
элемент $g\in G$ действует антилинейно. Значит, $\rk(E-g)\ge n\ge3\ge\rk(E-g)$, $n=3$.
\end{proof}

Для конечного подмножества $Q\subs\ggt^*$, рассматриваемого с~учётом кратностей своих элементов, подалгебры $\hgt\subs\ggt$ и~вектора
$\xi\in\ggt$ положим $Q|_{\hgt}:=\{\la|_{\hgt}\in\hgt^*\cln\la\in Q\}\subs\hgt^*$ и~$Q_{\xi}:=\{\la\in Q\cln\la(\xi)\ne0\}\subs Q$.

Пусть $v\in V$\т произвольный вектор. Имеем $\ggt_v(\ggt v)=\ggt(\ggt_v v)=0$, $\ggt v\subs V^{G_v^0}\subs V$ и, как следствие,
$M_v^{\perp}=(\ggt v)\oplus N_v^{G_v}\subs V^{G_v^0}\subs V$. Представление $G_v\cln V$ точное, а~множество его весов совпадает с~множеством
$P|_{\ggt_v}\subs\ggt_v^*$. Значит, представление $G_v^0\cln M_v$ точное, а~множество его весов с~точностью до нулей совпадает с~множеством
$P|_{\ggt_v}\subs\ggt_v^*$.

\begin{theorem}\label{di3} Если $m=1$, то $\Ad(G)=\{\pm E\}$ и~$\hn{P}=3$.
\end{theorem}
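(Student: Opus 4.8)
The plan is to reduce to Theorem~\ref{dim3} by slicing off the $G^0$-fixed part of $V$. Assume first $V_0\ne0$; the case $V_0=0$ is Theorem~\ref{dim3} verbatim. Decompose $V=V_0\oplus V'$ with $V':=V_0^{\perp}$, the sum of the nonzero weight spaces of $G^0$; both summands are $G$-invariant, and $V'\ne0$ because the faithful torus $G^0\cong\T^1$ must carry a nonzero weight. Since $G^0$ acts trivially on $V_0$, faithfulness of $G\cln V$ forces $G^0$ to act faithfully on $V'$, so $G^0\hookrightarrow\Or(V')$ and $\ggt=\Lie G^0$ is recovered from the $V'$-action. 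Put $L:=\{g\in G\cln g|_{V_0}=E\}$; then $G^0\subseteq L$, hence $L^0=G^0$ and $\Lie L=\ggt$.

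Next I would choose a suitable point of $V_0$. For $g\notin L$ the fixed set $V_0^{g}$ is a proper subspace of $V_0$, and $g|_{V_0}$ depends only on the coset $gG^0$; as $G/G^0$ is finite, only finitely many such subspaces occur. Taking $v\in V_0$ nonzero and outside their union yields $G_v=L$ (the inclusion $L\subseteq G_v$ being automatic). Now $v\in V_0=V^{G^0}$ gives $\ggt v=0$, so $N_v=V$ and $N_v^{G_v}=V^{L}$; from $G^0\subseteq L$ we get $V^{L}\subseteq V^{G^0}=V_0$, and from $L|_{V_0}=E$ we get $V_0\subseteq V^{L}$, whence $V^{L}=V_0$ and $M_v=N_v\cap(N_v^{G_v})^{\perp}=V_0^{\perp}=V'$. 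Corollary~\ref{slim} then shows that $M_v/G_v=V'/L$ is a homological manifold. Writing $\ol L\subseteq\Or(V')$ for the image of $L$, this says that $V'/\ol L$ is a homological manifold.

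The plan is now to feed $\ol L\cln V'$ into Theorem~\ref{dim3}. This action is faithful; $(\ol L)^0=G^0|_{V'}\cong\T^1$, so the dimension parameter is again $1$; and $(V')^{(\ol L)^0}=(V')^{G^0}=0$, the analogue of the hypothesis $V_0=0$. The weight system of $(\ol L)^0=G^0$ on $V'$ is exactly the multiset of nonzero weights of $G^0$ on $V$, i.e. $P$ with its zero weight deleted; it still spans $\ggt$ and, this being a property of the nonzero weights alone, remains $2$-homogeneous, so the standing hypotheses survive the reduction. Theorem~\ref{dim3} therefore gives $\Ad(\ol L)=\{\pm E\}$ and $\hn{P}=\dim_{\Cbb}V'=3$; and $\hn{P}$, being the total multiplicity of the nonzero weights, is the same whether computed for $G$ or for $\ol L$.

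Finally I would transfer the conclusion back to $G$. As $\dim\ggt=m=1$, every $\Ad(g)$ is an orthogonal transformation of a line, so $\Ad(G)\subseteq\{\pm E\}$ automatically. For the reverse inclusion, conjugation on $\ggt=\Lie G^0$ is detected on $V'$ because $G^0$ is faithful there; hence $\Ad_G(l)=\Ad_{\ol L}(\ol l)$ for $l\in L$, so $\{\pm E\}=\Ad(\ol L)=\Ad_G(L)\subseteq\Ad(G)$, giving $\Ad(G)=\{\pm E\}$. I expect the genuine work to lie in the middle steps: verifying the slice identity $M_v=V'$, and above all checking that $\ol L\cln V'$ really inherits all the standing hypotheses of Theorem~\ref{dim3} — faithfulness and the torus identity component are routine, but the $2$-homogeneity of the truncated weight system is the point to treat with care.
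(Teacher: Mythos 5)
Your proposal follows essentially the same route as the paper: the paper also reduces Theorem~\ref{di3} to Theorem~\ref{dim3} by taking a point $v\in V_0$ whose slice module satisfies $M_v\subs V_0^{\perp}$ (obtained by citing Statement~\ref{Mv} rather than your explicit genericity argument inside $V_0$), observing that $G_v\sups G^0$ so the slice representation $G_v\cln M_v$ keeps $\ggt_v=\ggt$, has $M_v^{G^0}=0$, and by Corollary~\ref{slim} has a homological-manifold quotient, and then applying Theorem~\ref{dim3} to it. The point you single out as needing care is treated in the paper exactly as you treat it: the weight system of $G_v^0\cln M_v$ is said to coincide with $P$ \emph{up to zeros}, and the standing $2$-stability hypothesis is carried over on that basis, so your proof is correct and matches the paper's.
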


\begin{proof} Согласно утверждению~\ref{Mv}, существует вектор $v\in V_0$, для которого $M_v\subs V_0^{\perp}$. Далее, в~силу следствия~\ref{slim},
$M_v/G_v$\т гомологическое многообразие. Имеем $G_v\sups G^0$, $G_v^0=G^0$, $\ggt_v=\Lie G_v=\ggt$, $\dim G_v=1$. Представление $G_v^0\cln M_v$ точное,
а~множество его весов с~точностью до нулей совпадает с~множеством $P\subs\ggt$. При этом $M_v\subs V_0^{\perp}$, откуда $M_v^{G_v^0}=M_v^{G^0}=0$.
Осталось применить теорему~\ref{dim3} к~представлению $G_v\cln M_v$.
\end{proof}

\begin{lemma}\label{hp3} Пусть $Q\subs P$\т подмножество, для которого $\caps{\la\in Q}(\Ker\la)=\R\xi\subs\ggt$, $\xi\in\ggt\sm\{0\}$. Тогда
$\Ad(G)\xi\ni-\xi$ и~$\hn{P_{\xi}}=3$.
\end{lemma}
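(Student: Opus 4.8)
The plan is to reduce the statement to the one-dimensional case of Theorem~\ref{di3} by passing to the slice representation at a carefully chosen point. First I would pick, for each $\la\in Q$, a nonzero vector $v_\la\in V_\la$ and put $v:=\sums{\la\in Q}v_\la$. Since every $X\in\ggt$ acts on $V_\la$ as an infinitesimal rotation of speed $\la(X)$ and the weight summands are pairwise orthogonal, the equality $Xv=0$ holds if and only if $\la(X)=0$ for all $\la\in Q$; hence $\ggt_v=\caps{\la\in Q}(\Ker\la)=\R\xi$. Consequently $\dim G_v=1$ and $G_v^0=\exp(\R\xi)$ is a circle. Because $\xi\in\Ker\la$ for every $\la\in Q$, the subgroup $G_v^0$ acts trivially on each $V_\la$ with $\la\in Q$, so both $v$ and the tangent space $\ggt v$ lie in $V^{G_v^0}$.

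Next I would feed the slice at $v$ into the machinery of \Ss\ref{facts}. As $V/G$ is a homological manifold, Corollary~\ref{slim} shows that $M_v/G_v$ is a homological manifold, and $G_v$ is a compact group acting on $M_v$ with $\dim G_v=1$. Split $V=V^{G_v^0}\oplus W$, where $W:=\bigoplus_{\la\in P_\xi}V_\la$ is the sum of the $G_v^0$-moving summands \ter{those with $\la(\xi)\ne0$}. Since $\ggt v\subs V^{G_v^0}$, one has $W\subs N_v$; and since $N_v^{G_v}\subs V^{G_v}\subs V^{G_v^0}$ is orthogonal to $W$, in fact $W\subs M_v$. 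Moreover $W$ is precisely the part of $M_v$ on which $G_v^0$ acts nontrivially, so the nonzero weights of $G_v^0\cln M_v$, counted with multiplicities, are exactly the restrictions $\la|_{\ggt_v}$ with $\la\in P_\xi$, and there are $\hn{P_\xi}$ of them.

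Finally I would apply Theorem~\ref{di3} to the compact linear group $G_v\cln M_v$, after replacing it if necessary by its faithful quotient on $M_v$ (which has the same orbit space and a one-dimensional identity component isogenous to $G_v^0$, hence the same number of nonzero weights). The theorem yields $\Ad(G_v)|_{\ggt_v}=\{\pm E\}$ and that $G_v^0\cln M_v$ has exactly three nonzero weights. The first conclusion produces $g\in G_v\subs G$ with $\Ad(g)\xi=-\xi$, so that $\Ad(G)\xi\ni-\xi$; the second, combined with the previous paragraph, gives $\hn{P_\xi}=3$.

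I expect the main difficulty to be the weight bookkeeping of the middle step: one must check that forming $M_v$ neither creates nor destroys nonzero $G_v^0$-weights and that multiplicities are preserved, i.e. that the moving part of $M_v$ equals $W$ exactly. This rests on the orthogonal splitting $N_v=(N_v\cap V^{G_v^0})\oplus W$ together with the inclusions $\ggt v,\,N_v^{G_v}\subs V^{G_v^0}$, and on the fact that $G_v^0$ acts on each $V_\la$ with $\la\in P_\xi$ through the nonzero weight $\la(\xi)$.
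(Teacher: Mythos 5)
Your proposal is correct and follows essentially the same route as the paper: choose $v:=\sums{\la\in Q}v_{\la}$ so that $\ggt_v=\R\xi$, pass to the slice representation $G_v\cln M_v$, invoke Corollary~\ref{slim} to see that $M_v/G_v$ is a homological manifold, and apply Theorem~\ref{di3} to read off both $\Ad(G)\xi\ni-\xi$ and $\hn{P_{\xi}}=3$. The weight bookkeeping you flag as the main difficulty is exactly what the paper handles in the paragraph preceding Theorem~\ref{di3}, and your verification of it is sound.
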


\begin{proof} В~каждой изотипной компоненте $V_{\la}\subs V$ ($\la\in Q$) выберем ненулевой вектор~$v_{\la}$. Положим $v:=\sums{\la\in Q}v_{\la}\in V$.
Имеем $\Lie G_v=\ggt_v=\caps{\la\in Q}(\Ker\la)=\R\xi\subs\ggt$, $\dim G_v=1$.

Представление $G_v^0\cln M_v$ точное, а~множество его весов с~точностью до нулей совпадает с~$2$\д устойчивым множеством $P|_{\ggt_v}\subs\ggt_v^*$.
Согласно следствию~\ref{slim}, $M_v/G_v$\т гомологическое многообразие. Далее, в~силу теоремы~\ref{di3},
$\br{\Ad(G_v)}|_{\ggt_v}=\{\pm\id_{\ggt_v}\}\subs\Or(\ggt_v)$, $\Ad(G_v)\xi=\{\pm\xi\}\subs\ggt_v$, $\Ad(G)\xi\ni-\xi$, а~также
$\bn{P|_{\ggt_v}}=3$, $\hn{P_{\xi}}=\bn{P|_{(\R\xi)}}=3$.
\end{proof}

\begin{lemma}\label{exa} Всякая неразложимая компонента~$Q$ множества $P\subs\ggt$ удовлетворяет равенству $\hn{Q}=\dim\ha{Q}+2$.
\end{lemma}

\begin{proof} Поскольку $\ha{P}=\ggt$, множество $P\subs\ggt=\ggt^*$ представляется в~виде объединения (с~учётом кратностей векторов) своих подмножеств
$P'$ и~$P\"$, таких что подмножество $P'\subs\ggt^*$ не содержит кратных векторов и~совпадает с~некоторым базисом $\{\la_1\sco\la_m\}$
пространства~$\ggt^*$. Существует базис $\{\xi_1\sco\xi_m\}$ пространства~$\ggt$, удовлетворяющий равенствам $\la_i(\xi_j)=\de_{ij}$ ($i,j=1\sco m$).

Имеем $\hnn{P'_{\xi_j}}=1$ ($j=1\sco m$) и~$\hnn{P'_{\xi_{j_1}}\swo P'_{\xi_{j_2}}}=2$ ($j_1,j_2=1\sco m$, $j_1\ne j_2$). Далее, для всякого $j=1\sco m$
пересечение ядер всех весов $\la_i\in P$ ($i=1\sco m$, $i\ne j$) есть не что иное как подпространство $\R\xi_j\subs\ggt$, $\xi_j\in\ggt\sm\{0\}$, и,
согласно лемме~\ref{hp3}, $\hnn{P_{\xi_j}}=3$, $\hnn{P\"_{\xi_j}}=\hnn{P_{\xi_j}}-\hnn{P'_{\xi_j}}=2$.

Как легко заметить, $P\"\sm\{0\}=\cupl{j=1}{m}P\"_{\xi_j}\subs P\"$.

Покажем, что среди подмножеств $P\"_{\xi_j}\subs P\"$ ($j=1\sco m$) никакие два различных не пересекаются.

Допустим, что  $P\"_{\xi_{j_1}}\ne P\"_{\xi_{j_2}}$ и~$P\"_{\xi_{j_1}}\cap P\"_{\xi_{j_2}}\ne\es$ для некоторых $j_1,j_2\in\{1\sco m\}$.

Ясно, что $j_1\ne j_2$. Значит, $\hnn{P'_{\xi_{j_1}}\swo P'_{\xi_{j_2}}}=2$. Кроме того, найдётся вес $\la\in P$, такой что $c_1:=\la(\xi_{j_1})\ne0$
и~$c_2:=\la(\xi_{j_2})\ne0$. Пересечение ядер всех весов $\la_i\in P$ ($i=1\sco m$, $i\ne j_1,j_2$) и~$\la\in P$ совпадает с~подпространством
$\R\xi\subs\ggt$, $\xi:=c_2\xi_{j_1}-c_1\xi_{j_2}\in\ggt\sm\{0\}$, и, в~силу леммы~\ref{hp3}, $\hn{P_{\xi}}=3$. Из соотношения $c_1,c_2\ne0$ вытекает,
что $P_{\xi_{j_1}}\swo P_{\xi_{j_2}}\subs P_{\xi}$, $\hnn{P_{\xi_{j_1}}\swo P_{\xi_{j_2}}}\le\hn{P_{\xi}}=3$,
$\hnn{P\"_{\xi_{j_1}}\swo P\"_{\xi_{j_2}}}=\hnn{P_{\xi_{j_1}}\swo P_{\xi_{j_2}}}-\hnn{P'_{\xi_{j_1}}\swo P'_{\xi_{j_2}}}\le1$. В~то же время
$\hnn{P\"_{\xi_{j_1}}}=\hnn{P\"_{\xi_{j_2}}}=2$ и~$P\"_{\xi_{j_1}}\ne P\"_{\xi_{j_2}}$, откуда
$\hnn{P\"_{\xi_{j_1}}\swo P\"_{\xi_{j_2}}}\ge2$. Получили противоречие.

Тем самым мы установили, что среди подмножеств $P\"_{\xi_j}\subs P\"$ ($j=1\sco m$) никакие два различных не пересекаются.

Следовательно, существуют разложения $\{1\sco m\}=\sqcupl{l=1}{p}I_l$ и~$P\"\sm\{0\}=\sqcupl{l=1}{p}Q\"_l\subs P\"$ ($p\in\N$), где
$I_l\subs\{1\sco m\}$, $I_l\ne\es$, $Q\"_l\subs P\"\sm\{0\}$, $\hn{Q\"_l}=2$ ($l=1\sco p$) и, кроме того, $P\"_{\xi_j}=Q\"_l\subs P\"$ ($l=1\sco p$,
$j\in I_l$).

Рассмотрим произвольное число $l\in\{1\sco p\}$.

Пусть $Q'_l\subs P'$\т подмножество, включающее в~себя каждый из векторов $\la_i\in\ggt^*$, $i\in I_l$, с~кратностью~$1$ и~не содержащее других векторов
пространства~$\ggt^*$, а~$Q_l\subs P$\т объединение (с~учётом кратностей векторов) подмножеств $Q'_l\subs P'$ и~$Q\"_l\subs P\"$. Если
$\la\in Q\"_l\subs P\"$ и~$j\in\{1\sco m\}\sm I_l$, то $\la\notin P\"_{\xi_j}$, $\la(\xi_j)=0$. Значит,
$Q\"_l\subs\ha{\la_i}_{i\in I_l}=\ha{Q'_l}\subs\ggt^*$, $\ha{Q_l}=\ha{Q'_l}\subs\ggt^*$, $\dim\ha{Q_l}=\dim\ha{Q'_l}=|Q'_l|=\hn{Q'_l}$,
$\hn{Q_l}=\hn{Q'_l}+\hn{Q\"_l}=\dim\ha{Q_l}+2$.

Имеем $\ggt^*=\oplusl{l=1}{p}\ha{Q'_l}=\oplusl{l=1}{p}\ha{Q_l}$ и~$P'=\sqcupl{l=1}{p}Q'_l\subs P\sm\{0\}$, откуда $P\sm\{0\}=\sqcupl{l=1}{p}Q_l\subs P$.
Таким образом, множество $P\sm\{0\}\subs P$ разлагается на компоненты $Q_l\subs P$ ($l=1\sco p$). Для всякого $l=1\sco p$ компонента $Q_l\subs P$
является $2$\д устойчивой и, в~силу равенства $\hn{Q_l}=\dim\ha{Q_l}+2$, неразложимой. Это завершает доказательство.
\end{proof}

\subsection{Порождающие стабилизаторы}

Нашей ближайшей целью является доказательство следующей теоремы.

\begin{theorem}\label{gst} Имеем $G_{\sta}=G$.
\end{theorem}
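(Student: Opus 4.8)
The plan is to pass from $G$ to the finite group $\Ad(G)$ and then read off everything from the combinatorics of the weight set $P$. Since $\Ad(G^0)=\{E\}$ we have $G^0\subs\Ker\Ad$, and Corollary~\ref{kea} gives $\Ker\Ad\subs G_{\sta}$; hence $G_{\sta}=G$ is equivalent to $\ol{G_{\sta}}=\Ad(G)$, where $\ol{G_{\sta}}$ is the image of $G_{\sta}$ in $\Ad(G)=G/\Ker\Ad$. As $G^0$ maps to $E$, this image equals $\ha{\Ad(D)}$. The set $D$ is stable under conjugation, so $G_{\sta}\nl G$ and $\ol A:=\Ad(G)/\ol{G_{\sta}}\cong G/G_{\sta}$. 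By Lemma~\ref{comm} the group $\ol A$ is perfect, so it suffices to prove that $\ol A$ is solvable: a perfect solvable group is trivial.

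The solvability argument rests on two inputs. Proposition~\ref{ist} shows that $\ol{G_{\sta}}$ contains every $w\in\Ad(G)$ having eigenvalue $1$ on $\ggt$ (that is, $\ggt^w\ne0$); and Lemma~\ref{hp3} already places in $\ol{G_{\sta}}$ the elements $\Ad(g)$ with $\Ad(g)\xi=-\xi$ attached to the lines $\xi=\caps{\la\in Q}(\Ker\la)$. I will use the first of these to kill the non-solvable part of the permutation action on the components of $P$.

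By Lemma~\ref{exa}, writing $\ggt_l:=\ha{Q_l}$ for the indecomposable components $Q_l$ of $P$, one has $\ggt=\bigoplus_l\ggt_l$, and the condition $\hn{P_\xi}=3$ of Lemma~\ref{hp3} forces each $Q_l$ to be a family of $\dim\ggt_l+2$ weights in general position (any $\dim\ggt_l$ of them independent): every hyperplane of $\ggt_l$ spanned by weights of $Q_l$ carries exactly $\dim\ggt_l-1$ of them. Since $\Ad(G)$ preserves $P$ and an invariant inner product, it permutes the $Q_l$, giving $1\to K\to\Ad(G)\xra{\rh}\Sig\to1$ with $\Sig\subs\mfr{S}_p$ and $K$ fixing each component. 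Each restriction $K|_{\ggt_l}$ injects into the group of linearly realizable permutations of $Q_l$, which — via the two-dimensional Gale dual of the general-position family $Q_l$, a set of $\dim\ggt_l+2$ directions in the plane — is contained in a dihedral group; hence $K|_{\ggt_l}$, and therefore $K$, is solvable.

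It remains to control $\Sig$. If $w\in\Ad(G)$ interchanges two congruent components $\ggt_l,\ggt_{l'}$ by an isometry and its inverse while fixing the remaining ones, then $w$ fixes the graph of that isometry pointwise, so $\ggt^w\ne0$ and $w\in\ol{G_{\sta}}$ by Proposition~\ref{ist}; the same holds whenever $\rh(w)$ is a product of disjoint such transpositions. In any non-solvable $\Sig$ the realizable double transpositions generate a nonabelian simple section (a nontrivial conjugacy class generates a simple group), and their lifts then force that section into $\rh(\ol{G_{\sta}})$. Thus $\Sig/\rh(\ol{G_{\sta}})$ is solvable, $\ol A$ is an extension of it by a quotient of the solvable group $K$, hence solvable, hence — being perfect — trivial. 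The main obstacle is exactly this last point: a priori $\Sig$ may be an arbitrary subgroup of $\mfr{S}_p$, and one must verify that the fixed-vector-carrying elements supplied by Proposition~\ref{ist} suffice to annihilate every non-solvable section of $\Sig$ after passing to $\ol A$.
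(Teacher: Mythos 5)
Your global strategy coincides with the paper's: reduce to showing that $G/G_{\sta}$ is both perfect (which you correctly extract from Lemma~\ref{comm}) and solvable, and feed elements into $G_{\sta}$ via Proposition~\ref{ist} by exhibiting nonzero fixed vectors on $\ggt$. But your solvability argument has a genuine gap, which you yourself flag at the end, and it is not a minor loose end. First, an element $w\in\Ad(G)$ with $\rh(w)$ a transposition of components need not interchange them ``by an isometry and its inverse'': nothing forces $w^2$ to act trivially on $\ggt_l$, so the graph $\{x+wx\}$ need not be fixed and $\ggt^w$ may well be zero. Second, the assertion that realizable double transpositions ``generate a nonabelian simple section'' because ``a nontrivial conjugacy class generates a simple group'' is false as stated, and nothing guarantees that $\Ad(G)$ contains any element of the required special form over a given transposition of $\Sig$. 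So the non-solvable part of $\Sig$ is not actually killed. The dihedral bound on $K|_{\ggt_l}$ via the planar Gale dual of the general-position family $Q_l$ is plausible but also left at the level of a sketch.

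The paper closes the argument by an entirely different, and much shorter, device: it proves (Lemma~\ref{cod}) that \emph{every nontrivial subgroup $\Ga\subs\Ad(G)$ of odd order satisfies $\ggt^{\Ga}\ne0$}. The proof is a counting argument on a minimal $\Ga$\mbox{-}invariant subset $Q\subs P$ on which the stabilizer acts without fixed vectors: by Lemma~\ref{exa} one has $\hn{Q}=m'+2$ with $m':=\dim\ha{Q}$ even, the set $\olQ$ of lines has exactly $m'+2$ elements of which any $m'$ are independent, every orbit on $\olQ$ has at least $m'+1$ elements and (odd order!) odd cardinality, forcing all orbits to have size exactly $m'+1$ and hence $(m'+1)\mid(m'+2)$, a contradiction. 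Writing $\bgm{\Ad(G)}=2^{d_0}k_0$ with $k_0$ odd, it follows that $\Ad(g^{2^{d_0}})$ has odd order, hence a nonzero fixed vector, hence $g^{2^{d_0}}\in G_{\sta}$ by Proposition~\ref{ist}; so $G/G_{\sta}$ has exponent dividing $2^{d_0}$, is a finite $2$\mbox{-}group, hence solvable, and being perfect by Lemma~\ref{comm} it is trivial. If you want to salvage your line of attack you would have to replace the unproved claims about $\Sig$ by something of comparable strength; the odd-order fixed-vector lemma is exactly the missing ingredient.
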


Доказательству теоремы~\ref{gst} предпошлём несколько вспомогательных утверждений.

\begin{lemma}\label{cod} Пусть $\Ga\subs\Ad(G)$\т коммутативная подгруппа нечётного порядка. Тогда $\ggt^{\Ga}\ne0$.
\end{lemma}

\begin{proof} Предположим, что $\ggt^{\Ga}=0$, т.\,е. что тавтологическое представление $\Ga\cln\ggt$ не имеет инвариантов.

Пусть $Q\subs\ggt$\т некоторая неразложимая компонента множества $P\subs\ggt$, $\Ga'$\т подгруппа
$\{\ga\in\Ga\cln\ga Q=Q\}=\bc{\ga\in\Ga\cln\ga\ha{Q}=\ha{Q}}\subs\Ga$, а~$\olQ$\т множество всевозможных прямых $\R\la\subs\ggt$, $\la\in Q\sm\{0\}$
(\textit{без учёта кратностей прямых}). Ясно, что $\Ga'\olQ=\olQ$.

Линейные оболочки всех неразложимых компонент множества $P\subs\ggt$ суть линейно независимые подпространства пространства~$\ggt$, переставляемые
группой~$\Ga$, и, согласно лемме~\ref{impr}, представление $\Ga'\cln\ha{Q}$ не имеет инвариантов. При этом $\Ga'\subs\Ga$\т коммутативная группа
нечётного порядка. В~силу следствий \ref{diev} и~\ref{old}, число $m':=\dim\ha{Q}\in\N$ чётно, а~любая орбита действия $\Ga'\cln\olQ$ состоит из
нечётного числа линейно зависимых прямых. В~частности, $m'\ge2$.

Неразложимая компонента $Q\subs\ggt$ множества $P\subs\ggt$ является $2$\д устойчивой и, согласно лемме~\ref{exa}, удовлетворяет равенству
$\hn{Q}=\dim\ha{Q}+2$. Значит, в~множестве $Q\subs\ggt$ любые ненулевые векторы в~количестве не более~$m'$ линейно независимы. Поскольку $m'\ge2$,
все ненулевые векторы данного множества попарно не пропорциональны. Следовательно, множество~$\olQ$ включает в~себя ровно $m'+2$ прямых, среди которых
любые в~количестве не более~$m'$ линейно независимы.

Ввиду вышесказанного, всякая орбита действия $\Ga'\cln\olQ$ имеет нечётный порядок, не меньший $m'+1$. При этом $|\olQ|=m'+2\dv2$, и, значит, порядок
любой орбиты действия $\Ga'\cln\olQ$ равен $m'+1$. Отсюда $m'+2=|\olQ|\dv m'+1$, $1\dv m'+1>1$. Получили противоречие.
\end{proof}

\begin{imp}\label{odr} Если $A\in\Ad(G)$ и~$A^k=E$, где $k$\т нечётное натуральное число, то $\ggt^A\ne0$.
\end{imp}

\begin{proof} Подгруппа $\Ga:=\ha{A}\subs\Ad(G)$ является коммутативной и~имеет нечётный порядок. В~силу леммы~\ref{cod}, $\ggt^A=\ggt^{\Ga}\ne0$.
\end{proof}

Число $\bgm{\Ad(G)}\in\N$ представимо в~виде $2^{d_0}k_0$, где $d_0\in\Z_{\ge0}$, а~$k_0$\т нечётное натуральное число.

\begin{prop} Пусть $g\in G$\т произвольный элемент. Тогда $h:=g^{2^{d_0}}\in G_{\sta}$.
\end{prop}

\begin{proof} Положим $A:=\Ad(h)=\br{\Ad(g)}^{2^{d_0}}\in\Ad(G)$. Имеем $A^{k_0}=\br{\Ad(g)}^{2^{d_0}k_0}=E$ и, в~силу следствия~\ref{odr},
$\ggt^{\Ad(h)}=\ggt^A\ne0$. Согласно предложению~\ref{ist}, $h\in G_{\sta}$.
\end{proof}

\begin{imp}\label{po2} Для всякого $\ga\in G/G_{\sta}$ имеем $\ga^{2^{d_0}}=e$.
\end{imp}

Теперь мы можем доказать теорему~\ref{gst}.

Напомним, что группа Ли $G/G_{\sta}$ конечна. В~силу следствия~\ref{po2}, указанная группа является конечной $2$\д группой и~потому разрешима. С~другой
стороны, согласно лемме~\ref{comm}, группа $G/G_{\sta}$ совпадает со своим коммутантом. Отсюда $G/G_{\sta}=\{e\}$, $G_{\sta}=G$.

Тем самым теорема~\ref{gst} доказана.

\begin{theorem}\label{fst} Группа~$G$ порождена объединением своих подгрупп $G^0$ и~$G_v$ \ter{$v\in V$, $|G_v|<\bes$}.
\end{theorem}

\begin{proof} Предположим, что утверждение теоремы не выполняется.

Докажем, что существует точное представление $G'\cln V'$ компактной группы Ли~$G'$, которое имеет размерность менее $\dim V$ и, аналогично представлению
$G\cln V$, удовлетворяет следующим условиям\:
\begin{nums}{-1}
\item\label{tori} $(G')^0\cong\T^{m'}$, $m'\in\N$\~
\item\label{2st} множество весов представления $G'\cln V'$ является $2$\д устойчивым\~
\item\label{homa} $V'/G'$\т гомологическое многообразие\~
\item\label{noge} группа~$G'$ не порождается своими подгруппами $(G')^0$ и~$G'_{v'}$ ($v'\in V'$, $|G'_{v'}|<\bes$).
\end{nums}

Согласно теореме~\ref{gst}, $\ha{G^0\cup D}=G_{\sta}=G$. Другими словами, группа~$G$ порождается своими подгруппами $G^0$ и~$G_v$
($v\in V\sm\{0\}$). В~то же время в~группе~$G$ объединение подгрупп $G^0$ и~$G_v$ ($v\in V$, $|G_v|<\bes$) порождает подгруппу $H\ne G$.
Значит, существует вектор $v\in V\sm\{0\}$, такой что подгруппа $G_v\subs G$ является бесконечной и~не содержится в~подгруппе $H\subs G$.

Представлению $G_v\cln M_v$ отвечает гомоморфизм групп Ли $R\cln G_v\to\Or(M_v)$. Положим $V':=M_v\subs N_v\subs V$ и~$G':=R(G_v)\subs\Or(M_v)=\Or(V')$.

Докажем, что тавтологическое представление $G'\cln V'$ является искомым.

Очевидно, что представление $G'\cln V'$ точное.

Имеем $G_v^0\cong\T^{m'}$, где $m':=\dim G_v\in\N$. Кроме того, представление $G_v^0\cln M_v$ точное, и~поэтому $(\Ker R)\cap G_v^0=\{e\}\subs G_v^0$.
Отсюда $(G')^0=R(G_v^0)\cong G_v^0\cong\T^{m'}$ и~$|\Ker R|<\bes$.

Множество весов представления $G_v\cln M_v$ с~точностью до нулей совпадает с~$2$\д устойчивым множеством $P|_{\ggt_v}\subs\ggt_v^*$. Поскольку
$|\Ker R|<\bes$, множество весов представления $G'\cln V'$ также $2$\д устойчиво.

Ясно, что $M_v/G_v\cong V'/G'$. В~силу следствия~\ref{slim}, фактор $M_v/G_v$ является гомологическим многообразием\~ то же можно сказать и~о~факторе
$V'/G'$.

Заметим, что $0\ne v\in N_v^{G_v}\subs(\ggt v)\oplus N_v^{G_v}=M_v^{\perp}=(V')^{\perp}\subs V$. Значит, $\dim V'<\dim V$.

Пусть $U$\т подмножество $\bc{v'\in V'\cln|G'_{v'}|<\bes}\subs V'$, а~$H'$\т подгруппа группы~$G'$, порождённая подгруппами $(G')^0\subs G'$
и~$G'_{v'}\subs G'$ ($v'\in U$). Покажем, что $H'\ne G'$.

В~силу соотношения $(G')^0\cong\T^{m'}$, стабилизатор общего положения точного представления $G'\cln V'$ конечен. Следовательно, $U\ne\es$.

Рассмотрим произвольный вектор $v'\in U$. Имеем $|G'_{v'}|<\bes$ и~$|\Ker R|<\bes$. Поэтому $\bgm{R^{-1}(G'_{v'})}<\bes$. Кроме того,
$v'\in U\subs V'=M_v\subs N_v$. Значит, найдётся число $\ep\in\R_{>0}$, для которого $G_{v+\ep v'}\subs G_v$. Очевидно, что
$G_{v+\ep v'}=G_v\cap G_{v'}=R^{-1}(G'_{v'})\subs G_v$. Таким образом, $R^{-1}(G'_{v'})=G_{v+\ep v'}\subs G_v$ и~$\bgm{R^{-1}(G'_{v'})}<\bes$,
откуда $\Ker R\subs R^{-1}(G'_{v'})\subs G_v\cap H$, $G'_{v'}=R\br{R^{-1}(G'_{v'})}\subs R(G_v\cap H)$.

Тем самым стало ясно, что для любого $v'\in U$ выполнены включения $\Ker R\subs G_v\cap H$ и~$G'_{v'}\subs R(G_v\cap H)$. Поскольку $U\ne\es$, имеем
$\Ker R\subs G_v\cap H$. Заметим, что $G^0\subs H$, $G_v^0\subs G_v\cap G^0\subs G_v\cap H$, $(G')^0=R(G_v^0)\subs R(G_v\cap H)$. Следовательно,
$H'\subs R(G_v\cap H)$, $R^{-1}(H')\subs R^{-1}\br{R(G_v\cap H)}=(\Ker R)(G_v\cap H)\subs(G_v\cap H)(G_v\cap H)=G_v\cap H\subs H$.

Если $H'=G'$, то $G_v=R^{-1}(H')\subs H$, что неверно. Поэтому $H'\ne G'$.

Мы видим, что представление $G'\cln V'$ является точным, а~также удовлетворяет неравенству $\dim V'<\dim V$ и~условиям \ref{tori}---\ref{noge}.

Рассуждая аналогично, мы получим бесконечную последовательность точных представлений $G'\cln V'$ компактных групп Ли~$G'$, в~которой
\begin{itemize}
\item каждое из представлений $G'\cln V'$ удовлетворяет условиям \ref{tori}---\ref{noge}\~
\item размерности пространств~$V'$ строго убывают.
\end{itemize}
Это приводит нас к~противоречию, завершающему доказательство.
\end{proof}

\subsection{Специальные элементы группы}

Обозначим через~$\wt{V}$ комплексное пространство $V\otimes\Cbb$, а~через~$\tau$\т оператор комплексной структуры
$\wt{V}\to\wt{V},\,x+yi\to x-yi,\,x,y\in V$. Представление $G\cln V$ естественным образом индуцирует комплексное представление $G\cln\wt{V}$. Пусть
$\wt{V}_1\sco\wt{V}_n\subs\wt{V}$ ($n\in\N$)\т изотипные компоненты комплексного представления $G^0\cln\wt{V}$. Имеем $\wt{V}=\oplusl{j=1}{n}\wt{V}_j$.
Любое неприводимое комплексное представление группы~$G^0$ одномерно, что влечёт соотношения $(G^0)|_{\wt{V}_j}\subs\T E\subs\GL_{\Cbb}(\wt{V}_j)$,
$j=1\sco n$. Группа~$G$ переставляет подпространства $\wt{V}_1\sco\wt{V}_n\subs\wt{V}$, причём
\eqn{\label{kad}
\Ker\Ad=\{g\in G\cln g\wt{V}_j=\wt{V}_j\,\fa j=1\sco n\}\subs G.}
Оператор $\tau\cln\wt{V}\to\wt{V}$ также переставляет подпространства $\wt{V}_1\sco\wt{V}_n$ пространства~$\wt{V}$. Если $j\in\{1\sco n\}$
и~$\tau\wt{V}_j=\wt{V}_j$, то каждый оператор подалгебры $\ggt|_{\wt{V}_j}\subs i\R E\subs\glg_{\Cbb}(\wt{V}_j)$ антикоммутирует с~оператором
$\tau|_{\wt{V}_j}\cln\wt{V}_j\to\wt{V}_j$, и, поскольку $[\ggt,\tau]=0$, выполнено равенство $\ggt\wt{V}_j=\ggt\tau\wt{V}_j=0$.

Рассмотрим произвольный элемент $g\in G$.

Положим $A:=\Ad(g)\in\Or(\ggt)$, $V^0:=\BC{v\in V\cln\br{(E-A)\ggt}v=0}=\opluss{\la\in P^A}V_{\la}\subs V$ и, кроме того,
$\wt{V}^0:=\BC{v\in\wt{V}\cln\br{(E-A)\ggt}v=0}=V^0\oplus iV^0\subs\wt{V}$.

Элемент $g\in G$ переставляет подпространства $\wt{V}_1\sco\wt{V}_n\subs\wt{V}$. Значит, существуют числа $p\in\N$, $k_1\sco k_p\in\N$,
$n_1\sco n_p\in\{1\sco n\}$ и~подпространства $\wt{V}^1\sco\wt{V}^p\subs\wt{V}$, для которых $\wt{V}^l=\oplusl{j=0}{k_l-1}g^j\wt{V}_{n_l}$,
$g^{k_l}\wt{V}_{n_l}=\wt{V}_{n_l}$ ($l=1\sco p$) и~$\wt{V}=\oplusl{l=1}{p}\wt{V}^l$. Без ограничения общности можно считать, что
$k_1\sco k_{p'}\ge2$ и~$k_{p'+1}\seq k_p=1$, где $p'\in\{0\sco p\}$.

Как легко заметить, $\wt{V}^0=\oplusl{l=p'+1}{p}\wt{V}^l\subs\wt{V}$, $\wt{V}=\oplusl{l=0}{p'}\wt{V}^l$. При этом $g\wt{V}^l=\wt{V}^l$ для всякого
$l=0\sco p$. Отсюда $\rk(E-g)=\dim\br{(E-g)V}=\dim_{\Cbb}\br{(E-g)\wt{V}}=\suml{l=0}{p'}\dim_{\Cbb}\br{(E-g)\wt{V}^l}$.

Пусть $l\in\{1\sco p'\}$\т произвольное число. Положим $d_l:=\dim_{\Cbb}\wt{V}_{n_l}\in\N$. Ясно, что $\dim_{\Cbb}\wt{V}^l=k_ld_l$
и~$\dim_{\Cbb}(\wt{V}^l)^g\le d_l$. Значит, $\dim_{\Cbb}\br{(E-g)\wt{V}^l}\ge(k_l-1)d_l$.

Ввиду вышесказанного, $\rk(E-g)\ge\dim_{\Cbb}\br{(E-g)\wt{V}^0}+\suml{l=1}{p'}(k_l-1)d_l$. Кроме того,
$2\cdot\bn{(E-A)P}=2\cdot\hn{P\sm P^A}=\dim(V/V^0)=\dim_{\Cbb}(\wt{V}/\wt{V}^0)=\suml{l=1}{p'}\dim_{\Cbb}\wt{V}^l=\suml{l=1}{p'}k_ld_l$, откуда
$\rk(E-g)-\bn{(E-A)P}\ge\dim_{\Cbb}\br{(E-g)\wt{V}^0}+\frac{1}{2}\cdot\suml{l=1}{p'}(k_l-2)d_l\ge\dim\br{(E-g)V^0}+\frac{1}{2}\cdot\suml{l=1}{p'}(k_l-2)$.

\begin{stm}\label{nor} Справедливо неравенство $\rk(E-g)\ge\bn{(E-A)P}$. При этом равенство $\rk(E-g)=\bn{(E-A)P}$ возможно лишь в~случае
$\opluss{\la\in P^A}V_{\la}\subs V^g$ и~$A^2=E$.
\end{stm}

\begin{proof} Имеем $\rk(E-g)-\bn{(E-A)P}\ge\dim\br{(E-g)V^0}+\frac{1}{2}\cdot\suml{l=1}{p'}(k_l-2)\ge0$. Допустим, что $\rk(E-g)-\bn{(E-A)P}=0$. Тогда
$(E-g)V^0=0$ и~$k_1\seq k_{p'}=2$. Следовательно, $\opluss{\la\in P^A}V_{\la}=V^0\subs V^g$. Кроме того, $k_1\sco k_p\in\{1,2\}$, и, значит,
$g^2\wt{V}_j=\wt{V}_j$, $j=1\sco n$. В~силу~\eqref{kad}, $\Ad(g^2)=E$.
\end{proof}

\begin{stm}\label{norp} Если $A^5=E$ и~$\rk(E-g)-\bn{(E-A)P}\le2$, то $A=E$.
\end{stm}

\begin{proof} По условию $\Ad(g^5)=E$. В~силу~\eqref{kad}, $g^5\wt{V}_j=\wt{V}_j$ для любого $j=1\sco n$. Отсюда $k_1\sco k_p\in\{1,5\}$,
$k_1\seq k_{p'}=5$, $2\ge\rk(E-g)-\bn{(E-A)P}\ge\frac{1}{2}\cdot\suml{l=1}{p'}(k_l-2)=\frac{3p'}{2}$, $p'\le1$.

Напомним, что оператор $\tau\cln\wt{V}\to\wt{V}$ переставляет подпространства $\wt{V}_1\sco\wt{V}_n\subs\wt{V}$, причём если $j\in\{1\sco n\}$
и~$\tau\wt{V}_j=\wt{V}_j$, то $\ggt\wt{V}_j=0$, $\wt{V}_j\subs\wt{V}^0$. Далее, $\wt{V}^0=\oplusl{l=p'+1}{p}\wt{V}^l\subs\wt{V}$
и~$\tau\wt{V}^0=\wt{V}^0$. Значит, $5p'=k_1\spl k_{p'}=\Bm{\bc{j\in\{1\sco n\}\cln\wt{V}_j\cap\wt{V}^0=0}}\dv2$, $p'\dv2$.

Поскольку $p'\le1$ и~$p'\dv2$, имеем $p'=0$, $k_1\seq k_p=1$. Таким образом, $g\wt{V}_j=\wt{V}_j$, $j=1\sco n$. В~силу~\eqref{kad}, $\Ad(g)=E$.
\end{proof}

Множество $(E-A)P\subs\ggt$ является $2$\д устойчивым. Его линейная оболочка есть не что иное как подпространство $(E-A)\ggt\subs\ggt$ размерности
$r:=\rk(E-A)\in\Z_{\ge0}$. Значит,
\begin{itemize}
\item если $A\ne E$, то $\bn{(E-A)P}-r\ge2$\~
\item если $A\ne E$ и~$\bn{(E-A)P}-r=2$, то множество $(E-A)P\subs\ggt$ неразложимо, а~любые его ненулевые векторы в~количестве не более~$r$ линейно
независимы.
\end{itemize}

\begin{stm}\label{ane} Допустим, что $A\ne E$. Тогда $\om(g)\ge2$, причём если $\om(g)=2$, то $\opluss{\la\in P^A}V_{\la}\subs V^g$, $A^2=E$
и~$\bn{(E-A)P}-r=2$.
\end{stm}

\begin{proof} Имеем $\om(g)=\Br{\rk(E-g)-\bn{(E-A)P}}+\Br{\bn{(E-A)P}-r}$. Осталось применить утверждение~\ref{nor}.
\end{proof}

\begin{imp}\label{aneo} Предположим, что $g\in\Om$ и~$A\ne E$. Тогда $\opluss{\la\in P^A}V_{\la}\subs V^g$, $A^2=E$, $\bn{(E-A)P}-r=2$, множество
$(E-A)P\subs\ggt$ неразложимо, а~любые его ненулевые векторы в~количестве не более~$r$ линейно независимы.
\end{imp}

\begin{lemma}\label{omp} Если $g\in\Om'$, то $A=E$.
\end{lemma}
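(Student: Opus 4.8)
The two conditions defining membership $g\in\Om'$ are $\om(g)=4$ and $\om(g^5)=0$, and the plan is to feed each of them into the sharp rank estimate of Statement~\ref{nor} together with its power-$5$ refinement, Statement~\ref{norp}. Write $A:=\Ad(g)$ and $r:=\rk(E-A)$, and record the additive identity $\om(g)=\rk(E-g)-r=\br{\rk(E-g)-\bn{(E-A)P}}+\br{\bn{(E-A)P}-r}$, whose two summands are both nonnegative: the first by Statement~\ref{nor}, the second because $(E-A)P\subs\ggt$ is a $2$-configuration spanning the $r$-dimensional subspace $(E-A)\ggt$, so that $\bn{(E-A)P}-r\ge2$ whenever $A\ne E$. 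Since $g$ was an arbitrary fixed element, the identical facts apply to every power of $g$.

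First I would deduce $A^5=E$ from $\om(g^5)=0$ alone. Applying the displayed splitting to $g^5$, with $\Ad(g^5)=A^5$, gives $0=\om(g^5)=\br{\rk(E-g^5)-\bn{(E-A^5)P}}+\br{\bn{(E-A^5)P}-\rk(E-A^5)}$, a sum of two nonnegative integers, so each of them vanishes. In particular $\bn{(E-A^5)P}=\rk(E-A^5)$; were $A^5\ne E$ this difference would be at least $2$, so necessarily $A^5=E$. This is exactly the hypothesis that unlocks Statement~\ref{norp}.

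It remains to supply the second hypothesis of Statement~\ref{norp}, the bound $\rk(E-g)-\bn{(E-A)P}\le2$. Suppose, for contradiction, that $A\ne E$; then $\bn{(E-A)P}-r\ge2$, and therefore $\rk(E-g)-\bn{(E-A)P}=\om(g)-\br{\bn{(E-A)P}-r}\le4-2=2$. Statement~\ref{norp}, now applicable because $A^5=E$, forces $A=E$, contradicting $A\ne E$; hence $A=E$. I expect no genuine obstacle: once the value $\om(g)=4$ is recognised as precisely the amount of slack that the inequality $\bn{(E-A)P}-r\ge2$ can absorb, the rest is bookkeeping with the additive decomposition of $\om$. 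The single point meriting care is the legitimacy of applying Statement~\ref{nor} and the $2$-configuration bound to $g^5$ as well as to $g$, which is justified simply because $g^5\in G$ and the earlier construction was carried out for an arbitrary element.
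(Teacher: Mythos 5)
Your proof is correct and uses exactly the ingredients of the paper's own argument (the additive splitting of $\om$, the bound $\bn{(E-A)P}-r\ge2$ for $A\ne E$, and Statement~\ref{norp}); the only difference is organizational, since you first extract $A^5=E$ from $\om(g^5)=0$ and then apply Statement~\ref{norp} directly, whereas the paper assumes $A\ne E$, uses Statement~\ref{norp} contrapositively to get $A^5\ne E$, and then contradicts $\om(g^5)=0$ via Statement~\ref{ane} applied to $g^5$. This is a harmless reordering of the same deduction, so no gap.
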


\begin{proof} Допустим, что $A\ne E$.

Имеем $\bn{(E-A)P}-r\ge2$. Далее, согласно условию, $\om(g)=4$ и~$\om(g^5)=0$. Значит, $\rk(E-g)-\bn{(E-A)P}=\om(g)-\Br{\bn{(E-A)P}-r}\le2$. В~силу
утверждения~\ref{norp}, $A^5\ne E$, $\Ad(g^5)\ne E$. Применяя \textit{к~элементу $g^5\in G$} утверждение~\ref{ane}, получаем, что $\om(g^5)\ge2$.
Это противоречит равенству $\om(g^5)=0$.
\end{proof}

\begin{lemma}\label{apme} Допустим, что $g\in\Om$, $A\ne E$ и~$P=P^A\cup P^{-A}$. Тогда среди неразложимых компонент множества $P\subs\ggt$ одна
содержится в~подпространстве $\ggt^{-A}\subs\ggt$, а~все остальные\т в~подпространстве $\ggt^A\subs\ggt$.
\end{lemma}

\begin{proof} Поскольку $\ggt^A\cap\ggt^{-A}=0$, множество $P\subs\ggt$ разлагается на компоненты $P^A\subs P$ и~$P^{-A}\subs P$. Согласно
следствию~\ref{aneo}, множество $(E-A)P\subs\ggt$ неразложимо. При этом множество $(E-A)P\subs\ggt$ с~точностью до нулей совпадает с~множеством
$2P^{-A}\subs\ggt$.
\end{proof}

\begin{lemma}\label{are} Предположим, что $g\in\Om$ и~$P\ne P^A\cup P^{-A}$. Тогда $r=1$, а~множество $P\sm P^A\subs\ggt$ неразложимо. Кроме того,
$\hn{P\sm P^A}=3$, $\dim\ha{P\sm P^A}=2$ и~$\hn{P^{-A}}=1$.
\end{lemma}

\begin{proof} По условию найдутся векторы $\la_1,\la_2\in P$, такие что $A\la_1=\la_2$ и~$\dim\ha{\la_1,\la_2}=2$\~ в~частности, $A\ne E$, $r>0$.
В~силу следствия~\ref{aneo}, $A^2=E$, $\hn{P\sm P^A}=\bn{(E-A)P}=r+2$, а~любые ненулевые векторы множества $(E-A)P\subs\ggt$ в~количестве не более~$r$
линейно независимы. Имеем $A\la_2=A^2\la_1=\la_1$. Далее, $\la_1,\la_2\in P$, $\la_2\ne\pm\la_1$, а~ненулевые векторы $(E-A)\la_1=\la_1-\la_2$
и~$(E-A)\la_2=\la_2-\la_1$ линейно зависимы. Значит, $2>r>0$, $r=1$, $\hn{P\sm P^A}=r+2=3$.

Таким образом, $A\la_1=\la_2\ne\pm\la_1$, $A\la_2=\la_1\ne\pm\la_2$, $\la_1,\la_2\notin P^A\cup P^{-A}$ и~$\hn{P\sm P^A}=3$. Отсюда
$P\sm P^A=\{\la,\la_1,\la_2\}$, где $\la\in P$ и~$A\la=\pm\la$.

Имеем $\la\in(P\sm P^A)\cap(P^A\cup P^{-A})=(P\sm P^A)\cap P^{-A}=P^{-A}\sm P^A=P^{-A}\sm\{0\}\subs\ggt^{-A}\sm\{0\}$. Поэтому
$\hn{P^{-A}}=\bn{P^{-A}\sm\{0\}}=\bn{(P\sm P^A)\cap P^{-A}}=1$. Ввиду соотношений $\la\in\ggt^{-A}\sm\{0\}$ и~$\rk(E-A)=r=1$, справедливо равенство
$(E-A)\ggt=\R\la$.

Заметим, что $0\ne\la_1-\la_2=(E-A)\la_1\in(E-A)\ggt=\R\la$, $\la_1-\la_2\in\R\la\sm\{0\}$. Отсюда
$\ha{P\sm P^A}=\ha{\la,\la_1,\la_2}=\ha{\la,\la_1}=\ha{\la,\la_2}=\ha{\la_1,\la_2}$. Значит, $P\sm P^A=\{\la,\la_1,\la_2\}\subs\ggt$\т неразложимое
множество, причём $\dim\ha{P\sm P^A}=\dim\ha{\la_1,\la_2}=2$.
\end{proof}

\begin{imp}\label{ppa} Предположим, что $g\in\Om$. Тогда подмножество $P\sm P^A\subs P\sm\{0\}$ содержится в~некоторой неразложимой компоненте множества
$P\sm\{0\}\subs\ggt$.
\end{imp}

\begin{proof} При $A=E$ доказывать нечего. В~случае же $A\ne E$ достаточно воспользоваться леммами \ref{apme} и~\ref{are}.
\end{proof}

\begin{imp}\label{adne} Предположим, что $g\in\Om$, а~также $A\ne E$. Тогда подмножество $P':=\bc{\la\in P\cln(E-g)V_{\la}\ne0}\subs P$ содержится
в~одной из неразложимых компонент множества $P\sm\{0\}\subs\ggt$. В~частности, $P'\subs P\sm\{0\}$.
\end{imp}

\begin{proof} Согласно следствию~\ref{aneo}, $P'\subs P\sm P^A$. Осталось применить следствие~\ref{ppa}.
\end{proof}

\begin{prop}\label{ade} Если $g\in\Om$ и~$A=E$, то все изотипные компоненты $V_{\la}\subs V$ \ter{$\la\in P$}, кроме, быть может, одной, содержатся
в~подпространстве $V^g\subs V$.
\end{prop}

\begin{proof} Поскольку $A=E$, имеем $gV_{\la}=V_{\la}$ ($\la\in P$) и~$g|_{V_{\la}}\in\GL_{\Cbb}(V_{\la})$ ($\la\in P\sm\{0\}$). Как следствие,
\equ{\begin{array}{c}
2\ge\om(g)=\rk(E-g)=\dim\br{(E-g)V}=\sums{\la\in P}\dim\br{(E-g)V_{\la}}=\\
=\dim\br{(E-g)V_0}+\sums{\sst{\la\in P\\\la\ne0}}2\cdot\dim_{\Cbb}\br{(E-g)V_{\la}}.
\end{array}\qedhere}
\end{proof}

\subsection{Разложение на компоненты}\label{decs}

Этот пункт посвящён доказательству теоремы~\ref{submain}.

Пусть $Q_1\sco Q_p\subs P$ ($p\in\N$)\т неразложимые компоненты множества $P\sm\{0\}\subs\ggt$. Имеем $P\sm\{0\}=\sqcupl{l=1}{p}Q_l\subs P$. Положим
$V_l:=\opluss{\la\in Q_l}V_{\la}\subs V$ ($l=1\sco p$).

В~записи~$V_0$ нижний индекс означает вес $0\in\ggt$\~ между тем, в~ряде случаев нам будет удобно понимать этот индекс и~как целое неотрицательное число.

Пространство~$V$ разлагается в~прямую сумму своих попарно ортогональных $G^0$\д инвариантных подпространств $V_l$, $l=0\sco p$. Далее, в~группе Ли~$G$
имеются подгруппы Ли $G_l:=\bc{g\in G\cln V_{l'}\subs V^g\fa l'\in\{0\sco p\}\sm\{l\}}$ ($l=0\sco p$) и~$\wt{G}:=G_0\sti G_p$. При этом для всякого
$l=0\sco p$ выполняется равенство $\wt{G}V_l=V_l$, а~представление $G_l\cln V_l$ точное. В~частности, $|G_0|<\bes$.

Нашей ближайшей целью является доказательство следующей теоремы.

\begin{theorem}\label{dec} Имеем $\wt{G}=G$.
\end{theorem}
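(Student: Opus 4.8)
Throughout, $\wt{G}\subseteq G$ is obvious, so the whole content is the reverse inclusion $G\subseteq\wt{G}$. I would first record two structural facts. The group $\Ad(G)$ permutes the indecomposable components $Q_1\sco Q_p$ of $P\sm\{0\}$, which yields a homomorphism $\si\cln G\to S_p$ to the symmetric group on the $p$ components, satisfying $gG_lg^{-1}=G_{\si(g)(l)}$; in particular $\wt{G}\nl G$. Moreover $\Ad(G^0)=\{E\}$, so $G^0\subseteq\Ker\si$ and $\ol{G}:=G/\wt{G}$ is a quotient of the finite group $G/G^0$. The strategy is then to exhibit generators of $G$ inside $\wt{G}$ and to control what is left over in $\ol{G}$.

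Next I would check the two basic inclusions. By Lemma~\ref{exa} the weight spans decompose orthogonally, $\ggt=\oplusl{l=1}{p}\hgt_l$ with $\hgt_l\cong\ha{Q_l}$; the subtorus $T_l:=\exp\hgt_l$ satisfies $\mu(\xi)=0$ for all $\mu\in Q_{l'}$, $\xi\in\hgt_l$, $l'\ne l$, hence acts trivially on each $V_{l'}$ with $l'\ne l$, so $T_l\subseteq G_l$ and $G^0=T_1\sd T_p\subseteq\wt{G}$. For $g\in\Om$ put $A:=\Ad(g)$. If $A=E$, Proposition~\ref{ade} shows that $g$ fixes pointwise every $V_{\la}$ except possibly one; if $A\ne E$, Corollary~\ref{adne} puts $\bc{\la\in P\cln(E-g)V_{\la}\ne0}$ inside a single component $Q_l$. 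In both cases $g$ fixes $V_{l'}$ pointwise for all $l'\ne l$, so $g\in G_l\subseteq\wt{G}$; thus $\Om\subseteq\wt{G}$.

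Now I would combine this with the generation theorems. By Theorem~\ref{fst}, $G$ is generated by $G^0$ and the finite isotropy groups $G_v$ ($v\in V$, $|G_v|<\bes$), and by Lemma~\ref{refst} each of these satisfies $G_v=\ha{G_v\cap\Om}\cdot[G_v,G_v]$. Since $G^0\subseteq\wt{G}$ and $G_v\cap\Om\subseteq\Om\subseteq\wt{G}$, the image $\ol{G_v}$ of $G_v$ in $\ol{G}$ equals the image of $[G_v,G_v]$, i.e. $\ol{G_v}=[\ol{G_v},\ol{G_v}]$. As $\ol{G}$ is generated by these perfect subgroups, it is itself perfect: $[\ol{G},\ol{G}]\supseteq\ol{G_v}$ for every $v$, so $[\ol{G},\ol{G}]=\ol{G}$.

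The decisive and hardest step is to show that the finite perfect group $\ol{G}$ is trivial, for perfectness alone does not force this. I would mirror the proof of Theorem~\ref{gst}, where perfectness of $G/G_{\sta}$ was combined with the fact that that quotient has exponent a power of $2$. Concretely, the plan is to bound the order of a class $\ga\in\ol{G}$ by passing through $\Ad$ and through the component permutation $\si$ (in the spirit of Corollary~\ref{odr}, which kills odd-order fixed-point-free actions on $\ggt$), thereby showing that every element of $\ol{G}$ has $2$-power order; then $\ol{G}$ is a finite $2$-group, hence nilpotent, and a nontrivial nilpotent group cannot be perfect. Therefore $\ol{G}=\{e\}$ and $\wt{G}=G$. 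I expect this last collapse to require invoking the homological-manifold hypothesis once more, exactly as in the $\pi_1$/$\pi_2$ arguments of this section.
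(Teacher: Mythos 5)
Your opening steps coincide with the paper's: $G^0\subs\wt{G}$ (this is Proposition~\ref{wtg0}), $\Om\subs\wt{G}$ via Corollary~\ref{adne} and Proposition~\ref{ade}, and the reduction through Theorem~\ref{fst} to the finite isotropy groups $G_v$. The divergence, and the genuine gap, is in how you dispose of the factor $[G_v,G_v]$ in Lemma~\ref{refst}. You only conclude that $\ol{G}=G/\wt{G}$ is a finite perfect group and then propose to kill it by showing that every element has order a power of two, in the spirit of Corollary~\ref{odr}. But that corollary, fed through Proposition~\ref{ist}, only places $g^{2^{d_0}}$ in $G_{\sta}$, and Theorem~\ref{gst} already says $G_{\sta}=G$; it gives no information about membership in $\wt{G}$. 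There is no analogue of Proposition~\ref{ist} with $\wt{G}$ in place of $G_{\sta}$: an element with $\ggt^{\Ad(g)}\ne0$ is caught there inside $H^0G_v$ for some $v$ with $\dim G_v>0$, and these infinite isotropy groups are precisely the ones Theorem~\ref{fst} lets you avoid, not ones you control modulo $\wt{G}$. So the step you yourself call decisive is not merely deferred; the route you sketch for it has no support in the available lemmas, and a finite perfect group cannot be dismissed without new input.

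The paper closes this by proving the stronger statement $G_v\subs\wt{G}$ outright (Lemma~\ref{wtgv}), which makes the perfect-quotient discussion unnecessary. Since $N_v/G_v$ is a homological manifold (Theorem~\ref{slice}), Theorem~\ref{lang} applies to the slice representation $G_v\cln N_v$ and yields $G_v=H_0\times H_1\sti H_k$, where $H_0$ is generated by pseudoreflections, hence $H_0=\ha{H_0\cap\Om}\subs\ha{\Om}\subs\wt{G}$, and each $H_j$ with $j\ge1$ is an isotropy group of the Poincar\'e sphere acting on a four-dimensional $W_j$. For $h\in H_j$ of order $5$ one computes $\om(h)=\dim W_j=4$ and $\om(h^5)=0$, so $h\in\Om'$, and Lemma~\ref{omp} gives $\Ad(h)=E$; since such elements generate $H_j$, one gets $\Ad(H_j)=\{E\}$, after which $H_j$ preserves every $V_{\la}$ and is shown to act nontrivially only inside a single $V_{\la}$, whence $H_j\subs\wt{G}$. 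This use of the homological-manifold hypothesis on the slice, together with the analysis of $\Om'$, is exactly the ingredient your argument is missing; without it, or something equivalent, the proof does not close.
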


Доказательству теоремы~\ref{dec} предпошлём несколько вспомогательных утверждений, которые в~дальнейшей части работы будут полезны и~сами по себе.

\begin{prop}\label{wtg0} Справедливо включение $G^0\subs\wt{G}$.
\end{prop}

\begin{proof} Для всякого $l=1\sco p$ подалгебра $\Lie G_l\subs\ggt$ есть не что иное как пересечение ядер всех весов
$\la\in Q_{l'}\subs P\subs\ggt=\ggt^*$, $l'\in\{1\sco p\}\sm\{l\}$. Поскольку $\ggt^*=\ha{P}=\oplusl{l=1}{p}\ha{Q_l}$, имеем
$\ggt=\oplusl{l=1}{p}\Lie G_l$, $\Lie\wt{G}=\ggt$, $\wt{G}\sups G^0$.
\end{proof}

\begin{prop} Справедливо включение $\Om\subs\wt{G}$.
\end{prop}

\begin{proof} Вытекает из следствия~\ref{adne} и~предложения~\ref{ade}.
\end{proof}

Пусть $v\in V$\т произвольный вектор, такой что $|G_v|<\bes$.

\begin{stm} Представление $G_v\cln N_v$ точное.
\end{stm}

\begin{proof} Если $g\in G_v$ и~$N_v\subs V^g$, то $\om(g)=\dim\br{(E-g)N_v}=0$, и, согласно утверждению~\ref{ane}, $\Ad(g)=E$, $\ggt v\subs V^g$,
$V^g\sups(\ggt v)\oplus N_v=V$, $g=E$.
\end{proof}

\begin{lemma}\label{wtgv} Имеем $G_v\subs\wt{G}$. Кроме того, существуют разложения $G_v=H_0\times H_1\sti H_k$ и~$N_v=W_0\oplus W_1\sop W_k$
\ter{$k\in\Z_{\ge0}$}, удовлетворяющие следующим условиям\:
\begin{nums}{-1}
\item\label{sys} подпространства $W_0,W_1\sco W_k\subs N_v$ попарно ортогональны и~$G_v$\д инвариантны\~
\item\label{res} для любых $i,j=0\sco k$ линейная группа $(H_i)|_{W_j}\subs\Or(W_j)$ тривиальна при $i\ne j$, порождена псевдоотражениями при $i=j=0$
и~изоморфна группе Пуанкаре при $i=j>0$ \ter{в~частности, $\dim W_j=4$ для всякого $j=1\sco k$}\~
\item\label{coin} если $\ha{G_v\cap\Om}\ne G_v$, то $k\ge1$, а~если $[G_v,G_v]\ne G_v$, то $\dim N_v\ge4k+2$\~
\item\label{iso} для любого $j=1\sco k$ найдётся вес $\la\in P$, такой что $V_{\la}\sups W_j$ и~$V_{\la}^{\perp}\subs V^{H_j}$.
\end{nums}
\end{lemma}

\begin{proof} В~силу теоремы~\ref{slice}, $N_v/G_v$\т гомологическое многообразие. Далее, применяя к~точному представлению $G_v\cln N_v$
теорему~\ref{lang}, получаем, что существуют разложения $G_v=H_0\times H_1\sti H_k$ и~$N_v=W_0\oplus W_1\sop W_k$, $k\in\Z_{\ge0}$, удовлетворяющие
условиям \ref{sys} и~\ref{res}. При этом $H_0=\ha{H_0\cap\Om}$, а~группа Пуанкаре совпадает со своим коммутантом, вследствие чего условие~\ref{coin}
также выполняется.

Пусть $j\in\{1\sco k\}$\т произвольное число.

Если $h\in H_j\sm\{E\}$ и~$h^5=E$, то $\om(h)=\dim\br{(E-h)N_v}=\dim W_j=4$ и, кроме того, $\om(h^5)=\om(E)=0$, откуда $h\in\Om'$, что вместе
с~леммой~\ref{omp} влечёт равенство $\Ad(h)=E$. Как известно, группа Пуанкаре порождается своими элементами порядка~$5$. Поэтому $\Ad(H_j)=\{E\}$. Отсюда
следует, что, во-первых, $H_jV_{\la}=V_{\la}$ для всякого $\la\in P$, а~во-вторых, что $\ggt v\subs V^{H_j}$. Значит,
$V^{H_j}=\ggt v\oplus N_v^{H_j}=\ggt v\oplus(N_v\cap W_j^{\perp})=W_j^{\perp}$. Тем самым нами установлено, что
\begin{itemize}
\item изотипные компоненты представления $H_j\cln V$ суть в~точности подпространства $W_j$ и~$V^{H_j}=W_j^{\perp}$ пространства~$V$, причём
представление $H_j\cln W_j$ неприводимо\~
\item пространство~$V$ разлагается в~прямую сумму своих попарно ортогональных $H_j$\д инвариантных подпространств $V_{\la}$, $\la\in P$.
\end{itemize}
Поэтому найдётся вес $\la\in P$, для которого $V_{\la}\sups W_j$ и~$V_{\la}^{\perp}\subs V^{H_j}$. Значит, $H_j\subs\wt{G}$.

Итак, $H_1\sco H_k\subs\wt{G}$. Кроме того, $H_0=\ha{H_0\cap\Om}\subs\ha{\Om}\subs\wt{G}$. Отсюда $G_v\subs\wt{G}$.
\end{proof}

\begin{imp}\label{ngen} Допустим, что $\ha{G_v\cap\Om}\ne G_v$. Тогда найдётся вес $\la\in P$, такой что $\dim V_{\la}\ge4$. Кроме того, если
$[G_v,G_v]\ne G_v$, то $\dim N_v\ge6$.
\end{imp}

Теперь утверждение теоремы~\ref{dec} вытекает непосредственно из теоремы~\ref{fst}, предложения~\ref{wtg0} и~леммы~\ref{wtgv}.

Имеем $G=\wt{G}=G_0\sti G_p$. Поэтому $V/G\cong(V_0/G_0)\sti(V_p/G_p)$, и, согласно лемме~\ref{prop}, каждый из факторов $V_l/G_l$, $l=0\sco p$,
является гомологическим многообразием. Далее, $Q_1\sco Q_p\subs P\sm\{0\}$, вследствие чего для любого $l=1\sco p$ множество весов представления
$G_l\cln V_l$ неразложимо, $2$\д устойчиво и~не содержит нулей.

Тем самым нами полностью доказана теорема~\ref{submain}.

\subsection{Неразложимый случай}\label{indec}

Этот пункт посвящён доказательствам импликаций $\text{\ref{ho}}\Ra\text{\ref{crit}}$ в~теоремах \ref{main} и~\ref{main1}.

Мы по-прежнему считаем, что $V/G$\т гомологическое многообразие, а~множество $P\subs\ggt$ является $2$\д устойчивым. Кроме того, будем предполагать, что
множество $P\subs\ggt$ неразложимо и~не содержит нулей.

Согласно лемме~\ref{hp3}, $\Ad(G)\ne\{E\}$.

В~силу леммы~\ref{exa}, $\hn{P}=m+2$. Ввиду $2$\д устойчивости множества $P\subs\ggt$, любые его векторы в~количестве не более~$m$ линейно независимы.
В~частности, при $m\ge2$ данное множество не содержит кратных векторов.

\subsubsection{Доказательство теоремы~\ref{main}}

Вначале докажем импликацию $\text{\ref{ho}}\Ra\text{\ref{crit}}$ в~теореме~\ref{main}.

Предположим, что $m\ge2$.

Требуется доказать, что выполнены условия \ref{pm2}---\ref{finst} из формулировки теоремы~\ref{main}.

Как уже отмечалось, $\hn{P}=m+2$. Пространство~$V$ разлагается в~прямую сумму попарно ортогональных двумерных неприводимых $G^0$\д инвариантных
подпространств $W_1\sco W_{m+2}\subs V$. При этом множество $P\subs\ggt$ не содержит кратных векторов. Значит,
\begin{itemize}
\item подпространства $W_1\sco W_{m+2}\subs V$ являются изотипными компонентами представления $G^0\cln V$ и~переставляются группой~$G$\~
\item для всякого $\la\in P$ имеем $\dim V_{\la}=2$.
\end{itemize}
Согласно следствию~\ref{ngen}, если $v\in V$ и~$|G_v|<\bes$, то $G_v=\ha{G_v\cap\Om}$. Теперь, пользуясь теоремой~\ref{fst}, получаем, что
$G=\ha{G^0\cup\Om}$, $\Ad(G)=\ba{\Ad(\Om)}$.

Таким образом, мы уже доказали, что условия \ref{pm2} и~\ref{finst} выполняются.

Пусть $g\in\Om$\т произвольный элемент, такой что $A:=\Ad(g)\ne\pm E$.

В~силу леммы~\ref{apme}, $P\ne P^A\cup P^{-A}$. Применяя лемму~\ref{are}, получаем, что $\hn{P\sm P^A}=3$, $\dim\ha{P\sm P^A}=2$ и~$\hn{P^{-A}}=1$. Мы
видим, что множество $P\subs\ggt$ содержит три линейно зависимых вектора. Значит, $3>m\ge2$, $m=2$, $\hn{P}=4$, $\hn{P^A}=\hn{P}-\hn{P\sm P^A}=1$.

Допустим, что $m>2$.

Из вышесказанного следует, что $\Ad(\Om)\subs\{\pm E\}$, $\Ad(G)=\ba{\Ad(\Om)}\subs\{\pm E\}$, откуда $GW_j=W_j\fa j=1\sco m+2$. Далее, поскольку
$\Ad(G)\ne\{E\}$, имеем $\Ad(G)=\{\pm E\}$, и, таким образом, все условия \ref{pm2}---\ref{finst} выполняются.

Теперь предположим, что $m=2$.

Имеем $\hn{P}=4$, $P=\{\la_1,\la_2,\la_3,\la_4\}\subs\ggt$, $\la_1,\la_2,\la_3,\la_4\in\ggt\sm\{0\}$. Любые два вектора множества $P\subs\ggt$ линейно
независимы, и, значит, прямые $\R\la_j\subs\ggt$, $j=1,2,3,4$, попарно различны. Будем считать, что
\eqn{\label{pair}
\begin{array}{lcc}
\fa i,j\in\{1,2,3,4\}&\quad\quad&\br{(\la_i,\la_j)=0}\Ra\Br{\br{\{i,j\}=\{1,2\}}\lor\br{\{i,j\}=\{3,4\}}};\\
\fa j\in\{1,2,3,4\}&\quad\quad&V_{\la_j}=W_j\subs V
\end{array}}
(этого можно добиться путём надлежащих перенумераций).

Рассмотрим произвольный элемент $g\in\Om$.

Покажем, что $g(W_1\oplus W_2)=W_1\oplus W_2$.

При $A:=\Ad(g)=\pm E$ доказывать нечего.

Допустим, что $A\ne\pm E$. Тогда $\hn{P^A}=\hn{P^{-A}}=1$. Следовательно, найдутся числа $i,j\in\{1,2,3,4\}$, такие что $A\la_i=\la_i$ и~$A\la_j=-\la_j$.
Очевидно, что $(\la_i,\la_j)=0$, $gV_{\la_i}=V_{\la_i}$ и~$gV_{\la_j}=V_{\la_j}$. В~силу~\eqref{pair}, $g(W_1\oplus W_2)=W_1\oplus W_2$.

Тем самым мы установили, что $g(W_1\oplus W_2)=W_1\oplus W_2$ для всякого $g\in\Om$. При этом $G=\ha{G^0\cup\Om}$, откуда
$G(W_1\oplus W_2)=W_1\oplus W_2$.

Таким образом, условия \ref{pm2}, \ref{oplu} и~\ref{finst} выполняются.

Осталось проверить условие~\ref{adj}.

Достаточно доказать, что $-E\in\Ad(G)$.

Допустим, что $-E\notin\Ad(G)$.

Согласно лемме~\ref{hp3}, группа $\Ad(G)\subs\Or(\ggt)$ содержит отражение относительно каждой из четырёх попарно различных прямых $\R\la_j\subs\ggt$,
$j=1,2,3,4$. Значит, $\bgm{\Ad(G)}>4$.

Поскольку $G(W_1\oplus W_2)=W_1\oplus W_2$, для любого $g\in G$ имеем $g^2W_j=W_j$ ($j=1,2,3,4$), $P\subs\Ker\br{E-\Ad(g^2)}\cup\Ker\br{E+\Ad(g^2)}$, что
вместе с~неразложимостью множества $P\subs\ggt$ и~соотношением $-E\notin\Ad(G)$ влечёт равенство $\br{\Ad(g)}^2=\Ad(g^2)=E$. Мы видим, что все операторы
группы $\Ad(G)\subs\Or(\ggt)$ инволютивны. Поэтому $\Bm{\br{\Ad(G)}\cap\br{\SO(\ggt)}}\le2$, $\bgm{\Ad(G)}\le4$, что противоречит неравенству
$\bgm{\Ad(G)}>4$.

Следовательно, $-E\in\Ad(G)$, и, таким образом, все условия \ref{pm2}---\ref{finst} выполняются.

Тем самым теорема~\ref{main} полностью доказана.

\subsubsection{Доказательство теоремы~\ref{main1}}

Теперь докажем импликацию $\text{\ref{ho1}}\Ra\text{\ref{crit1}}$ в~теореме~\ref{main1}.

Предположим, что $m=1$, а~группа $G\subs\Or(V)$ не содержит комплексных отражений.

Требуется доказать, что $\dim_{\Cbb}V=\hn{P}=3$, $\Ad(G)=\{\pm E\}$, $G=\ha{\Om}$, а~представление $G\cln V$ приводимо.

Как уже отмечалось, $\dim_{\Cbb}V=\hn{P}=m+2=3$. Кроме того, $\Ad(G)\ne\{E\}$, откуда $\Ad(G)=\{\pm E\}$. Если $v\in V$, $|G_v|<\bes$
и~$G_v\ne\ha{G_v\cap\Om}$, то $\dim N_v=\dim V-\dim G=5$ и, согласно следствию~\ref{ngen}, $[G_v,G_v]=G_v\ne\{E\}$, $G_v\subs[G,G]\subs\Ker\Ad$, что,
в~частности, влечёт неразрешимость группы $\Ker\Ad\subs G$.

Допустим, что представление $G\cln V$ неприводимо.

Поскольку группа $G\subs\Or(V)$ неприводима и~не содержит комплексных отражений, имеем $G^0=\T E\subs\GL_{\Cbb}(V)$ и~$\Ker\Ad=G^0K\subs G$, где
$K:=\Ker\Ad\cap\SL_{\Cbb}(V)\subs\GL_{\Cbb}(V)$\т конечная неприводимая импримитивная комплексная линейная группа (см.~\cite[\Ss7]{My1}).

Комплексное пространство~$V$ разлагается в~прямую сумму трёх одномерных комплексных подпространств, переставляемых группой $K\subs\GL_{\Cbb}(V)$,
а~значит, и~группой $\Ker\Ad=G^0K\subs\GL_{\Cbb}(V)$. Поэтому существует гомоморфизм $\Ker\Ad\to S_3$ с~коммутативным ядром. Группа~$S_3$ разрешима\~ то
же можно сказать и~о~группе $\Ker\Ad\subs G$.

Следовательно, если $v\in V$ и~$|G_v|<\bes$, то $G_v=\ha{G_v\cap\Om}$. Пользуясь теоремой~\ref{fst}, получаем, что $G=\ha{G^0\cup\Om}$.

Таким образом, $\dim G=1$, $0\notin P$, $\dim_{\Cbb}V=\hn{P}=3$, $G=\ha{G^0\cup\Om}$, $G_v=\ha{G_v\cap\Om}$ ($v\in V$, $|G_v|<\bes$), а~группа
$G\subs\Or(V)$ неприводима и~не содержит комплексных отражений. Данная ситуация невозможна (см.~\cite[\Ss7]{My1}, рассуждения в~точности повторяют
доказательство леммы~7.2).

Полученное противоречие показывает, что представление $G\cln V$ приводимо.

Осталось доказать, что $G=\ha{\Om}$.

Приводимое представление $G\cln V$ обладает двумерным инвариантным подпространством. Значит, найдётся вектор $v\in V\sm\{0\}$, для которого $Gv=G^0v$.
Имеем $|G_v|<\bes$ и~$G=G^0G_v$. Если $G_v\ne\ha{G_v\cap\Om}$, то $G_v\subs\Ker\Ad$, $G=G^0G_v\subs\Ker\Ad$, что противоречит равенству
$\Ad(G)=\{\pm E\}$. Отсюда $G_v=\ha{G_v\cap\Om}$, $G=G^0G_v\subs\ha{G^0\cup\Om}$.

Мы видим, что $G=\ha{G^0\cup\Om}$. Поскольку $\Ad(G)=\{\pm E\}$, существует элемент $g\in\Om$, такой что $\Ad(g)=-E$. Далее, в~группе~$G$
каждый элемент подмножества $G^0g$ сопряжён элементу $g\in\Om$ и~потому принадлежит подмножеству~$\Om$. Отсюда $\ha{\Om}\sups G^0$,
$\ha{\Om}\sups G^0\cup\Om$, $\ha{\Om}\sups\ha{G^0\cup\Om}=G$, $G=\ha{\Om}$.

Тем самым теорема~\ref{main1} полностью доказана.

\newpage

\end{document}